\definecolor{darkgreen}{rgb}{0.0, 0.5, 0.0}
\def\BibTeX{{\rm B\kern-.05em{\sc i\kern-.025em b}\kern-.08em
		T\kern-.1667em\lower.7ex\hbox{E}\kern-.125emX}}
\title{\LARGE \bf
Distributionally robust LMI synthesis for LTI systems
}
\author{Dennis Gramlich, Shuhao Yan, Carsten W. Scherer and Christian Ebenbauer
\thanks{Funded by Deutsche Forschungsgemeinschaft (DFG, German Research Foundation) under Germany's Excellence Strategy - EXC 2075 – 390740016. We acknowledge the support by the Stuttgart Center for Simulation Science (SimTech).}
\thanks{Dennis Gramlich and Christian Ebenbauer are with the Chair of Intelligent Control Systems,
        RWTH Aachen University,
        52074 Aachen, Germany
        {\tt\small \{dennis.gramlich, christian.ebenbauer\} @ic.rwth-aachen.de}}%
\thanks{Carsten W. Scherer and Shuhao Yan are with the Chair of Mathematical Systems Theory, University of Stuttgart, 70569 Stuttgart, Germany {\tt\small \{carsten.scherer, shuhao.yan\}@imng.uni-stuttgart.de}}%
}%
\begin{document}

\maketitle
\thispagestyle{empty}
\pagestyle{empty}

\begin{abstract}

	This article shows that distributionally robust controller synthesis as investigated in \cite{taskesen2024distributionally} can be formulated as a convex linear matrix inequality (LMI) synthesis problem. To this end, we rely on well-established convexification techniques from robust control. The LMI synthesis problem we propose has the advantage that it can be solved efficiently using off-the-shelf semi-definite programming (SDP) solvers. In addition, our formulation exposes the studied distributionally robust controller synthesis problem as an instance of robust $H_2$ synthesis.

\end{abstract}

\begin{IEEEkeywords}
	distributionally robust control, linear matrix inequalities, robust-$H_2$ control
\end{IEEEkeywords}


\section{Introduction and problem statement}
\label{sec:1}

\IEEEPARstart{D}{istributionally} robust control (DRC) addresses control problems where the probability distributions governing system disturbances are not precisely known. Unlike classical robust control, which considers worst-case deterministic uncertainties, DRC optimizes performance against the worst-case probability distribution within an ambiguity set, making it particularly suited for data-driven scenarios where distributional information can be estimated from samples. The goal of this article is to showcase the fundamental relations between DRC, robust $H_2$ control, and moment relaxations for linear quadratic control problems. As our main result, we show that linear quadratic DRC problems are specific instances of robust $H_2$ control problems. Thus, efficient LMI-synthesis methods are applicable for these problems. At the same time, this result strengthens the foundation of classical robust control theory providing yet another problem that can be embedded into this framework.

Concretely, we study a (generalized) plant of the form
\begin{align}
  \begin{pmatrix}
			x(t+1)\\
			z(t)\\
			y(t)
		\end{pmatrix}
		=
		\begin{pmatrix}
			A & B_w & B_u\\
			C_z & D_{zw} & D_{zu}\\
			C_y & D_{yw} & 0
		\end{pmatrix}
		\begin{pmatrix}
			x(t)\\
			w(t)\\
			u(t)
		\end{pmatrix}
    \label{eq:generalizedPlant}
\end{align}
that is in feedback with a controller given by
\begin{align}
	\begin{pmatrix}
		x_c(t+1)\\
		u(t)
	\end{pmatrix}
	=
	\begin{pmatrix}
		\dvrC{A_c} & \dvrC{B_c}\\
		\dvrC{C_c} & \dvrC{D_c}
	\end{pmatrix}
	\begin{pmatrix}
		x_c(t)\\
		y(t)
	\end{pmatrix}.
  \label{eq:controller}
\end{align}
Here, the state $x$, the input $u$, the disturbance $w$, the performance output $z$, the measured output $y$, and the controller state $x_c$ are stochastic processes defined over a joint (separable, metric) sample space $\Omega$. Further, we assume $x(t)\in L_2(\Omega,\bbR^{n_x})$, $u(t) \in L_2(\Omega, \bbR^{n_u})$, $w(t) \in L_2(\Omega,\bbR^{n_w})$, $z(t) \in L_2(\Omega,\bbR^{n_z})$, $y(t) \in L_2(\Omega,\bbR^{n_y})$, and $x_c(t) \in L_2(\Omega,\bbR^{n_x})$ at all times $t \in \bbN_0$.

The matrices $A \in \bbR^{n_x \times n_x}$, $B_w \in \bbR^{n_x \times n_w}$, $B_u \in \bbR^{n_x \times n_u}$, $C_z \in \bbR^{n_z \times n_x}$, $D_{zw} \in \bbR^{n_z \times n_w}$, $D_{zu} \in \bbR^{n_z\times n_u}$, $C_y \in \bbR^{n_y \times n_x}$, $D_{yw} \in \bbR^{n_y \times n_w}$ are the system matrices. The system matrices are assumed to be known problem parameters. The matrices $\dvrC{A_c} \in \bbR^{n_x \times n_x}$, $\dvrC{B_c} \in \bbR^{n_x \times n_y}$, $\dvrC{C_c} \in \bbR^{n_u \times n_x}$, and $\dvrC{D_c} \in \bbR^{n_u \times n_y}$ define the controller. The controller matrices are colored in blue, since they are the decision variables in the optimization problem \eqref{eq:mainProblem}. Auxiliary optimization variables are colored in cyan (magenta), if we minimize (maximize) over them.

We aim to design a controller \eqref{eq:controller} for the plant \eqref{eq:generalizedPlant} by solving the control problem
\begin{align}
	\minimize_{\dvrC{A_c},\dvrC{B_c},\dvrC{C_c},\dvrC{D_c}} ~&~ v(\dvrC{A_c},\dvrC{B_c},\dvrC{C_c},\dvrC{D_c}), \label{eq:mainProblem}\\
  \mathrm{s.t.} ~&~
  \dvrC{\calA} \text{ is stable,} \nonumber
\end{align}
where the cost function is defined as
\begin{subequations}
	\label{eq:finiteHorizonProblem}
	\begin{align}
		v(\dvrC{A_c},\dvrC{B_c},\dvrC{C_c},\dvrC{D_c}) =
    \sup_{w \in \scrW} ~&~ \limsup_{T \to \infty} \frac{1}{T} \sum_{t = 0}^{T-1} \bbE \left\| z(t) \right\|^2 \label{eq:finiteHorizonProblemA}\\
		\mathrm{s.t.} ~&~ \begin{pmatrix}
    \chi(t+1)\\
    z(t)
  \end{pmatrix}
  =
  \begin{pmatrix}
    \dvrC{\calA} & \dvrC{\calB}\\
    \dvrC{\calC} & \dvrC{\calD}
  \end{pmatrix}
  \begin{pmatrix}
    \chi(t)\\
    w(t)
  \end{pmatrix}, \label{eq:finiteHorizonProblemC}\\
		&~ \chi(0) = 0. \label{eq:finiteHorizonProblemD}
	\end{align}
\end{subequations}

Here, $\chi^\top(t) = \begin{pmatrix} x^\top(t)& x_c^\top(t) \end{pmatrix}$ is the combined state of the closed-loop system between \eqref{eq:generalizedPlant} and \eqref{eq:controller}. The closed-loop matrices $\dvrC{\calA},\dvrC{\calB},\dvrC{\calC},\dvrC{\calD}$ are given by
\begin{align*}
  \setlength{\arraycolsep}{2pt}
	\left(
	\begin{array}{c|c}
		\dvrC{\calA} & \dvrC{\calB}\\
		\hline
		\dvrC{\calC} & \dvrC{\calD}
	\end{array}
	\right)
	:=
	\left(
	\begin{array}{cc|c}
		A + B_u \dvrC{D_c} C_y & B_u \dvrC{C_c} & B_w + B_u \dvrC{D_c} D_{yw}\\
		\dvrC{B_c} C_y & \dvrC{A_c} & \dvrC{B_c} D_{yw}\\
		\hline
		C_z + D_{zu} \dvrC{D_c} C_y & D_{zu} \dvrC{C_c} & D_{zw} + D_{zu} \dvrC{D_c} D_{yw}
	\end{array}
	\right).
\end{align*}
Like the controller matrices, the closed-loop matrices are colored in blue, since they are affine functions of the controller parameters.

The ambiguity set $\scrW$ confines the probability distributions of the disturbance $w$ using the 2-Wasserstein metric. It may take one of the following forms:
\begin{enumerate}
	\item The ambiguity set $\scrW_\text{cor}$ contains all sequences of random disturbances $w: \bbN_0 \to L_2(\Omega,\bbR^{n_w})$ that satisfy $\bbW(\bbP_{w(t)},\bbP_\text{nom}) \leq \gamma$ for all $t \in \bbN_0$ with a fixed, zero mean, normal distribution $\bbP_\text{nom}$ with $\Sigma_\text{nom} := \int vv^\top \diff \bbP_\text{nom}(v)$.
	\item The ambiguity set $\scrW_\text{ind}$ contains all sequences of \emph{independent, zero mean} random disturbances $w: \bbN_0 \to L_2(\Omega,\bbR^{n_w})$ that satisfy $\bbW(\bbP_{w(t)},\bbP_\text{nom}) \leq \gamma$ for all $t \in \bbN_0$ for a fixed, zero mean, normal distribution $\bbP_\text{nom}$ with $\Sigma_\text{nom} := \int vv^\top \diff \bbP_\text{nom}(v)$.
\end{enumerate}

\section{Related work}
\label{sec:relatedWork}

\emph{Distributionally robust control} (DRC) is a modern umbrella term used in control theory to describe the control of dynamic systems under disturbance inputs whose distributions are uncertain. The goal is to find controllers that perform optimally under the worst-case disturbance distributions within an ambiguity set. In the past (before the term DRC was coined), this ambiguity set has, e.g., been described by constraints on the autocorrelation function of the disturbance distribution \cite{gusev1995minimax_a,gusev1995minimax_b}; a problem which has been motivated as a robust version of $H_2$ optimal control. In \cite{gusev1996method}, the author shows that optimal output feedback synthesis subject to worst-case disturbances with restricted autocorrelation function can be cast as an infinite dimensional convex optimization problem (with an auxiliary model matching problem). Subsequently, it has been shown in \cite{scherer2000robust} that DRC synthesis can be cast as a small sized SDP for restrictions on a autocorrelation sequence up to a finite lag.

The label DRC emerged later as an analog to \emph{distributionally robust optimization} (DRO). The latter studies optimization problems, where problem parameters are random variables whose distributions are assumed to lie within an ambiguity set, which is commonly defined using probability metrics \cite{mohajerin2018data,nguyen2023bridging} or the first two moments \cite{delage2010distributionally}. Wasserstein-based ambiguity sets are popular in this context due to favorable out-of-sample performance \cite{mohajerin2018data} and the ability to incorporate prior knowledge through nominal distributions.

The link to DRO has popularized Wasserstein ambiguity sets also in DRC. Consequently, a number of publications started to pursue these concepts. For example, \cite{kim2023distributional} studies state feedback DRC, noticing that state feedback policies adapt only to correlated, but not uncorrelated disturbances from Wasserstein ambiguity sets. State estimation problems, on the other hand, yield solutions that adapt to the disturbance distributions, even for independent disturbances \cite{han2024distributionally}. Most interesting is the case of output feedback policies. This is investigated in \cite{YBS25_rocond} and \cite{9992738} over a finite control horizon and in \cite{brouillon2023distributionally} over an infinite control horizon. However, the latter generalization relies on a stationarity assumption on the disturbance process. Unlike general robust controller synthesis, distributionally robust output feedback synthesis can be convexified \cite{taskesen2024distributionally} (cmp. \cite{kotsalis2021convex}). However, up to this date, convexificatoins of output feedback infinite-horizon Wasserstein DRC lack the elegant SDP formulations, as obtained for the problem of constrained autocorrelation functions in \cite{scherer2000robust}.

Finally, while we focus on linear quadratic problems, we mention works that apply DRC to slightly different settings. E.g., constrained DRC problems are investigated in \cite{van2015distributionally} and the articles \cite{10384311, 10313386, kargin2024infinite} study the gap between the cost of a causal controller and the cost of the optimal noncausal controller (regret) instead of the worst-case (expected) cost.

\section{Contributions}
\label{sec:contributions}

The present article makes the following contributions: (1) We prove that infinite-horizon DRC with Wasserstein ambiguity sets admits an exact moment relaxation, establishing tight upper and lower bounds that eliminate any relaxation gap. This exactness result enables the reduction of infinite-dimensional distributionally robust optimization to finite-dimensional convex programming. (2) We establish the first complete equivalence between infinite-horizon Wasserstein-distributionally robust control and robust $H_2$ synthesis, proving that DRC with Wasserstein ambiguity sets is precisely a structured uncertainty variant of classical robust control. (3) Unlike prior work, we develop compact LMI synthesis formulations (similar to \cite{scherer2000robust}) of Wasserstein-DRC for both temporally correlated and independent disturbances, eliminating the need for horizon-dependent discretizations or stationarity assumptions and enabling polynomial-time synthesis via standard SDP solvers. (4) The SDP formulations we propose inherit the modular structure of traditional robust control problems. As a consequence, we are able to merge the old and new results on DRC in Appendix~\ref{app:mergingAmbiguitySets}, considering uncertainties with both Wasserstein and autocorrelation restrictions simultaneously.

\section{Preliminaries}
\label{sec:preliminaries}

In this article, we make use of well-known results on the 2-Wasserstein metric and its relation to the second moment matrices of random variables. 
\begin{definition}
	\label{def:wasserstein}
	The 2-Wasserstein metric between probability distributions $\bbP_1$ and $\bbP_2$ on $\bbR^d$ is defined as
	\begin{align*}
		\bbW(\bbP_1,\bbP_2) := \inf_{\pi \in \Pi(\bbP_1,\bbP_2)} \left(\int_{\bbR^d \times \bbR^d} \|\xi_1 - \xi_2\|^2 \diff\pi(\xi_1,\xi_2)\right)^{\frac{1}{2}},
	\end{align*}
	where $\Pi(\bbP_1,\bbP_2)$ is the set of all joint distributions of the random variables $\xi_1,\xi_2$ with marginal distributions $\bbP_1,\bbP_2$.
\end{definition}

In \cite{gelbrich1990formula}, the Gelbrich distance between probability distributions is introduced. It is a lower bound on the 2-Wasserstein metric and enjoys the following properties.

\begin{theorem}[{\cite[Theorem 2.1 and Proposition 2.2]{gelbrich1990formula}}]
	\label{thm:gelbrichProperties}
	For two distributions $\bbP_1$, $\bbP_2$ on $\bbR^d$ with mean vectors $\mu_1$ and $\mu_2$ and covariance matrices $\Sigma_1$ and $\Sigma_2$, respectively, the following holds:
	\begin{enumerate}
		\item $\bbG(\bbP_1,\bbP_2) \leq \bbW(\bbP_1,\bbP_2)$, where $\bbG(\bbP_1,\bbP_2)$ denotes the Gelbrich distance between $\bbP_1$, $\bbP_2$ and is defined as
			\begin{align*}
		\bbG(\bbP_1,\bbP_2)^2 :=\| \mu_1 - \mu_2 \|^2 + \tr\bigl(\Sigma_1 + \Sigma_2 - 2 \bigl(\Sigma_1^{\frac{1}{2}} \Sigma_2 \Sigma_1^{\frac{1}{2}}\bigr)^\frac{1}{2}  \bigr).
	\end{align*}
	This bound is exact if $\bbP_1$, $\bbP_2$ are normal distributions.		
		\item If $\mu_1 = \mu_2$, the Gelbrich distance $\bbG(\bbP_1,\bbP_2)$ is equal to the optimal value of the optimization problem
		\begin{align*}
			\minimize_{\dvr{\ovl{\Sigma}} \in \bbR^{d\times d}} & \tr \big( \Sigma_1 + \Sigma_2 - 2\dvr{\ovl{\Sigma}}  \big)^{\frac{1}{2}}\\
			\mathrm{s.t.} & \begin{pmatrix}
				\Sigma_1 & \dvr{\ovl{\Sigma}}\\
				\dvr{\ovl{\Sigma}^\top} & \Sigma_2
			\end{pmatrix} \succeq 0.
		\end{align*}
		\item If $\Sigma_1 \succ 0$ and $\bbP_1$, $\bbP_2$ are normal distributions with zero mean, then there exists an optimal linear transport plan $\Delta \in \bbR^{d\times d}$ and two random variables $\xi_1 \sim \bbP_1$, $\xi_2 \sim \bbP_2$ such that $\xi_2 = \Delta \xi_1$ and
      \begin{equation*}
         \bbW(\bbP_1,\bbP_2) = \bbG(\bbP_1,\bbP_2) = \sqrt{\tr\big( (I-\Delta)  \Sigma_1 (I-\Delta)^\top \big)}.
      \end{equation*}
	\end{enumerate}
\end{theorem}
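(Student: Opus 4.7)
My plan is to prove all three statements in a unified way by reducing everything to a single SDP on the cross-covariance of the coupling. First I would expand the squared integrand, then characterize the admissible cross-covariances via a block-PSD constraint, and finally construct an explicit optimal coupling in the normal case to establish tightness.

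For Part~1 (the Gelbrich lower bound), I would start from an arbitrary coupling $\pi \in \Pi(\bbP_1,\bbP_2)$, define the cross-covariance $\bar\Sigma_\pi := \int (\xi_1-\mu_1)(\xi_2-\mu_2)^\top \diff\pi(\xi_1,\xi_2)$, and expand
\begin{align*}
    \int \|\xi_1-\xi_2\|^2 \diff\pi = \|\mu_1-\mu_2\|^2 + \tr(\Sigma_1 + \Sigma_2) - 2\tr(\bar\Sigma_\pi).
\end{align*}
Since $\bar\Sigma_\pi$ is the off-diagonal block of the covariance of the centered joint vector, the block matrix $\bigl(\begin{smallmatrix} \Sigma_1 & \bar\Sigma_\pi \\ \bar\Sigma_\pi^\top & \Sigma_2 \end{smallmatrix}\bigr)$ is PSD. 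Hence $\bbW(\bbP_1,\bbP_2)^2$ is lower-bounded by the SDP that maximizes $\tr(\bar\Sigma)$ subject to this block-PSD constraint. I would then evaluate that supremum in closed form as $\tr\bigl((\Sigma_1^{1/2}\Sigma_2\Sigma_1^{1/2})^{1/2}\bigr)$, which is exactly $\bbG^2$ (minus the mean term), proving $\bbG \leq \bbW$. Part~2 follows immediately by reinterpreting the same SDP under $\mu_1=\mu_2$ and taking the outer square root.

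For the equality claim in Part~1 and for Part~3, I would produce an explicit coupling that attains the bound. Assuming $\Sigma_1 \succ 0$ and that both marginals are zero-mean normals, I would set
\begin{align*}
    \Delta := \Sigma_1^{-1/2}\bigl(\Sigma_1^{1/2}\Sigma_2\Sigma_1^{1/2}\bigr)^{1/2}\Sigma_1^{-1/2}, \qquad \xi_2 := \Delta \xi_1,
\end{align*}
with $\xi_1 \sim \bbP_1$. Linearity and closure of normals under linear maps together with the identity $\Delta \Sigma_1 \Delta = \Sigma_2$ give $\xi_2 \sim \bbP_2$, and the cross-covariance $\Delta\Sigma_1$ saturates the SDP of Part~1, so the transport cost equals $\tr\bigl((I-\Delta)\Sigma_1(I-\Delta)^\top\bigr) = \bbG^2$. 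The degenerate Gaussian case with $\Sigma_1 \succeq 0$ singular would follow by perturbing $\Sigma_1 \mapsto \Sigma_1 + \epsilon I$ and passing to the limit using continuity of both $\bbW$ and $\bbG$ in the covariance.

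The hard part will be resolving the SDP $\sup \tr(\bar\Sigma)$ under the block-PSD constraint in closed form. Assuming $\Sigma_1 \succ 0$, a Schur complement turns the constraint into $\bar\Sigma^\top \Sigma_1^{-1}\bar\Sigma \preceq \Sigma_2$, and the change of variable $M := \Sigma_1^{-1/2}\bar\Sigma$ reduces the trace maximization to a von Neumann/SVD inequality whose extremum is $\tr\bigl((\Sigma_1^{1/2}\Sigma_2\Sigma_1^{1/2})^{1/2}\bigr)$. Everything else in the proof is algebraic manipulation or a continuity argument handling the singular cases.
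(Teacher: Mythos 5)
The paper does not prove this statement at all: Theorem~\ref{thm:gelbrichProperties} is imported with a citation to Gelbrich (1990), so there is no internal proof to compare against and your argument must stand on its own. It does: the expansion of the coupling cost into $\|\mu_1-\mu_2\|^2+\tr(\Sigma_1+\Sigma_2)-2\tr(\bar\Sigma_\pi)$, the observation that admissible cross-covariances are exactly those making the block matrix PSD, the resulting SDP characterization (which gives 2) directly), and the explicit Gaussian coupling $\xi_2=\Delta\xi_1$ with $\Delta=\Sigma_1^{-1/2}(\Sigma_1^{1/2}\Sigma_2\Sigma_1^{1/2})^{1/2}\Sigma_1^{-1/2}$, which satisfies $\Delta\Sigma_1\Delta=\Sigma_2$ and saturates the bound, is precisely the standard route to all three claims, and the perturbation/translation arguments for singular $\Sigma_1$ and nonzero means are routine. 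One technical caution on the step you yourself flag as the hard part: after the Schur complement and the substitution $M=\Sigma_1^{-1/2}\bar\Sigma$ with $M^\top M\preceq\Sigma_2$, the \emph{direct} von Neumann pairing of $\Sigma_1^{1/2}$ with $M$ only yields $\tr(\Sigma_1^{1/2}M)\le\sum_i\sqrt{\lambda_i(\Sigma_1)\lambda_i(\Sigma_2)}$, which overshoots the claimed extremum $\tr\bigl((\Sigma_1^{1/2}\Sigma_2\Sigma_1^{1/2})^{1/2}\bigr)$ whenever $\Sigma_1,\Sigma_2$ do not commute; to get the correct closed form you should first factor $M=K\Sigma_2^{1/2}$ with a contraction $K$ (well defined on the range of $\Sigma_2$), so that $\tr(\Sigma_1^{1/2}M)=\tr(\Sigma_2^{1/2}\Sigma_1^{1/2}K)\le\|\Sigma_2^{1/2}\Sigma_1^{1/2}\|_*=\tr\bigl((\Sigma_1^{1/2}\Sigma_2\Sigma_1^{1/2})^{1/2}\bigr)$ by nuclear/spectral-norm duality, with equality for the partial isometry from the polar decomposition. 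With that refinement spelled out, your proposal is a complete and correct self-contained proof of the cited result.
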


\section{Robust performance analysis}
\label{sec:robustAnalysis}

As is customary in robust control, we begin with the \emph{robust performance analysis} problem, i.e., with the study of \eqref{eq:finiteHorizonProblem} for fixed controller parameters.

We first note that the worst-case disturbance problem \eqref{eq:finiteHorizonProblem} can be rewritten as
\begin{subequations}
  \label{eq:worstCaseProblemAlternative}
\begin{align}
  \maximize_{\substack{w: \bbN_0 \to L_2(\Omega,\bbR^{n_w}),\\\hat{w}: \bbN_0 \to L_2(\Omega,\bbR^{n_w})}} ~&~ \limsup_{T \to \infty} \frac{1}{T}\sum_{t = 0}^{T-1} \bbE \left\| z(t) \right\|^2, \label{eq:worstCaseProblemAlternativeA}\\
  &~ \begin{pmatrix}
    \chi(t+1)\\
    z(t)
  \end{pmatrix}
  =
  \begin{pmatrix}
    \dvrC{\calA} & \dvrC{\calB}\\
    \dvrC{\calC} & \dvrC{\calD}
  \end{pmatrix}
  \begin{pmatrix}
    \chi(t)\\
    w(t)
  \end{pmatrix}, \label{eq:worstCaseProblemAlternativeB}\\
  &~ \gamma^2 \geq \bbE \left\| w(t) - \hat{w}(t) \right\|^2, \label{eq:worstCaseProblemAlternativeC}\\
  &~ \bbP_{\hat{w}(t)} = \bbP_\text{nom}, \label{eq:worstCaseProblemAlternativeD}\\
	&~ \chi(0) = 0, \label{eq:worstCaseProblemAlternativeE}
\end{align}
\end{subequations}
with potentially additional constraints for independence and zero mean assumptions on the random variable $w$.

Problem \eqref{eq:worstCaseProblemAlternative} introduces an auxiliary stochastic process $\hat{w}$ with marginal distributions $\bbP_{\hat{w}(t)} = \bbP_\text{nom}$ for all $t \in \bbN_0$. The purpose of $\hat{w}$ is to encode the Wasserstein constraint $\bbW(\bbP_{w(t)},\bbP_\text{nom}) \leq \gamma$ in the conditions \eqref{eq:worstCaseProblemAlternativeC} and \eqref{eq:worstCaseProblemAlternativeD}. This reformulation exploits the variational definition of the Wasserstein distance: by  requiring $\bbE\|w(t) - \hat{w}(t)\|^2 \leq \gamma^2$, the joint distribution of $(w(t), \hat{w}(t))$ provides an explicit coupling that certifies the Wasserstein bound. Since any feasible coupling yields an upper bound on the Wasserstein distance, this reformulation captures distributions within the Wasserstein ball.
This is made precise in the following result.
\begin{theorem}
  \label{thm:finiteHorizonProblemEquivalent}
  Consider a fixed controller \eqref{eq:controller} and a fixed system \eqref{eq:finiteHorizonProblemC}. Then the problems \eqref{eq:finiteHorizonProblem} and \eqref{eq:worstCaseProblemAlternative} are equivalent in the sense that every feasible point $w \in \scrW$ of \eqref{eq:finiteHorizonProblem} is in one-to-one correspondence with a feasible point $(w,\hat{w})$ of \eqref{eq:worstCaseProblemAlternative} and the objective values of these solutions are equal, respectively.
\end{theorem}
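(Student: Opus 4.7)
The proof hinges on the variational definition of the 2-Wasserstein metric in Definition~\ref{def:wasserstein}: the constraint $\bbW(\bbP_{w(t)},\bbP_\mathrm{nom})\le\gamma$ is exactly an infimum over couplings of $\bbP_{w(t)}$ and $\bbP_\mathrm{nom}$. The plan is therefore to show that the process $\hat w$ in \eqref{eq:worstCaseProblemAlternative} is precisely an explicit realization of such a coupling on the sample space $\Omega$, and that moving between the two formulations preserves feasibility and the objective.

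For the easier direction ($\Leftarrow$), I would start with any feasible pair $(w,\hat w)$ of \eqref{eq:worstCaseProblemAlternative}. The joint law of $(w(t),\hat w(t))$ is an element of $\Pi(\bbP_{w(t)},\bbP_\mathrm{nom})$ by \eqref{eq:worstCaseProblemAlternativeD}, so
\begin{equation*}
  \bbW(\bbP_{w(t)},\bbP_\mathrm{nom})^2 \;\le\; \int \|\xi_1-\xi_2\|^2\,\diff \bbP_{(w(t),\hat w(t))} \;=\; \bbE\|w(t)-\hat w(t)\|^2 \;\le\; \gamma^2
\end{equation*}
by \eqref{eq:worstCaseProblemAlternativeC}, so $w\in\scrW_\mathrm{cor}$; the same $w$ is feasible in \eqref{eq:finiteHorizonProblem}, and produces the same $z(t)$ via \eqref{eq:finiteHorizonProblemC} starting from \eqref{eq:finiteHorizonProblemD}, hence the same objective \eqref{eq:worstCaseProblemAlternativeA}=\eqref{eq:finiteHorizonProblemA}. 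Independence and zero-mean properties of $w$ are inherited unchanged.

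For the harder direction ($\Rightarrow$), given $w\in\scrW$, for each $t$ I would invoke the existence of an optimal (or, if not attained, $\epsilon$-optimal and then taking limits via a standard Polish-space argument) transport plan $\pi_t\in\Pi(\bbP_{w(t)},\bbP_\mathrm{nom})$ achieving $\int\|\xi_1-\xi_2\|^2\diff\pi_t\le\gamma^2$. I then construct $\hat w(t)$ on the same $\Omega$ as the second component of a random vector whose joint law is $\pi_t$; since $\Omega$ is a separable metric sample space, possibly after a standard enlargement to accommodate extra randomness (e.g., tensoring with $[0,1]$ equipped with Lebesgue measure) one can use the disintegration of $\pi_t$ along its first marginal to define $\hat w(t)$ as a measurable function of $w(t)$ and an independent uniform. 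By construction $\bbP_{\hat w(t)}=\bbP_\mathrm{nom}$, constraint \eqref{eq:worstCaseProblemAlternativeC} holds, and the dynamics \eqref{eq:finiteHorizonProblemC}/\eqref{eq:worstCaseProblemAlternativeB} and objective coincide since $\hat w$ does not enter either. The independent-disturbance case $\scrW_\mathrm{ind}$ is handled by performing the coupling construction independently at each $t$, which preserves both independence across $t$ and zero mean (the latter because $\bbP_\mathrm{nom}$ is centered).

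The main obstacle is the measurable realization step in the forward direction: one needs the couplings $\pi_t$ to be realizable as bona-fide random variables on the given sample space. This is where the separable metric hypothesis on $\Omega$ enters, together with standard tools (Kuratowski--Ryll-Nardzewski selection or conditional-distribution disintegration on Polish spaces). Once that is in place, the rest reduces to bookkeeping, since $\hat w$ is a purely auxiliary process that does not affect $\chi$, $z$, or the cost.
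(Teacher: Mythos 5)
Your proposal is correct, and its overall strategy coincides with the paper's: membership in the Wasserstein ball is certified by a coupling, the backward direction reads off the joint law of $(w(t),\hat w(t))$ as such a coupling, and the forward direction realizes, for each $t$, an optimal transport plan $\pi_t\in\Pi(\bbP_{w(t)},\bbP_\text{nom})$ (attained by Villani, Thm.\ 4.1) as an honest random pair on an enlarged sample space, with $\hat w$ purely auxiliary so that $\chi$, $z$ and the cost are untouched. The only genuine difference is the measure-theoretic device used for that realization. The paper builds the enlarged space $\Omega^\infty=\Omega\times\prod_{t}\bbR^d$ by iterating the gluing lemma to obtain consistent finite-horizon laws $\bbP^t$ and then invokes Kolmogorov's extension theorem; you instead disintegrate each $\pi_t$ along its first marginal and use the standard noise-outsourcing/transfer representation to write $\hat w(t)=F_t\bigl(w(t),U_t\bigr)$ with auxiliary uniforms $U_t$ on $\Omega\times[0,1]^{\bbN}$ independent of $w$. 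Both routes rest on the Polishness of $\bbR^d$ (regular conditional distributions), and both enlarge the sample space, which remains separable metric. Your construction buys an explicit pointwise formula for $\hat w(t)$ and avoids verifying consistency of the family $\{\bbP^t\}$ and the extension step; the paper's buys a construction that works directly at the level of joint laws without needing the functional representation of a kernel by a uniform. Minor remarks: the Kuratowski--Ryll-Nardzewski selection theorem you mention is not needed (disintegration suffices), and the hedge about non-attainment of the optimal coupling is unnecessary since for the squared-Euclidean cost with finite second moments the infimum is attained.
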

The proof is deferred to Appendix \ref{app:equivalence}.

At this point, we notice that \eqref{eq:worstCaseProblemAlternative} is a quadratic optimal control problem over random variables, as studied in \cite{gramlich2024moment}. In \cite{gramlich2024moment}, this problem is approached with the averaged second moment matrices $\Sigma(T)$, which are defined to be equal to
\begin{align*}
  \begin{pmatrix}
    \Sigma_{\chi\chi}(T) & \Sigma_{\chi w}(T) & \Sigma_{\chi \hat{w}}(T)\\
    \Sigma_{w\chi}(T) & \Sigma_{ww}(T) & \Sigma_{w \hat{w}}(T)\\
    \Sigma_{\hat{w}\chi}(T) & \Sigma_{\hat{w}w}(T) & \Sigma_{\hat{w} \hat{w}}(T)
  \end{pmatrix}
  :=
  \frac{1}{T}\sum_{t=0}^{T-1}
  \bbE
  \begin{pmatrix}
    \chi(t)\\
    w(t)\\
    \hat{w}(t)
  \end{pmatrix}
  \begin{pmatrix}
    \chi(t)\\
    w(t)\\
    \hat{w}(t)
  \end{pmatrix}^\top \!\!,
\end{align*}
and a stationary moment matrix $\dvrMax{\Sigma}$, where the latter is supposed to represent the limit of $\Sigma(T)$ as $T \to \infty$ if that limit exists.
Using the stationary moment matrix $\dvrMax{\Sigma}$, we define the moment relaxed optimization problem
\begin{subequations}
  \label{eq:momentRelaxedProblem}
\begin{align}
  \maximize_{\dvrMax{\Sigma} \in \scrS} ~&~ \tr\left(
  \begin{pmatrix}
    \dvrC{\calC} & \dvrC{\calD} & 0
  \end{pmatrix}
  \dvrMax{\Sigma}
  \begin{pmatrix}
    \dvrC{\calC} & \dvrC{\calD} & 0
  \end{pmatrix}^\top
  \right), \label{eq:momentRelaxedProblemA}
\end{align}
subject to
\begin{align}
  &~
  \begin{pmatrix}
    I & 0 & 0
  \end{pmatrix}
  \dvrMax{\Sigma}
  \begin{pmatrix}
    I & 0 & 0
  \end{pmatrix}^\top
  =
  \begin{pmatrix}
    \dvrC{\calA} & \dvrC{\calB} & 0
  \end{pmatrix}
  \dvrMax{\Sigma}
  \begin{pmatrix}
    \dvrC{\calA} & \dvrC{\calB} & 0
  \end{pmatrix}^\top,\label{eq:momentRelaxedProblemB}\\
  &~
  \gamma^2 \geq
  \tr\left(
  \begin{pmatrix}
    0 & I & 0\\
    0 & 0 & I
  \end{pmatrix}
  \dvrMax{\Sigma}
  \begin{pmatrix}
    0 & I & 0\\
    0 & 0 & I\\
  \end{pmatrix}^\top
  \begin{pmatrix}
    I & -I\\
    -I & I
  \end{pmatrix}
  \right) , \label{eq:momentRelaxedProblemC}\\
  &~ \Sigma_\text{nom} = \begin{pmatrix}
    0 & 0 & I
  \end{pmatrix}
  \dvrMax{\Sigma}
  \begin{pmatrix}
    0 & 0 & I
  \end{pmatrix}^\top, \label{eq:momentRelaxedProblemE}\\
  &~ \dvrMax{\Sigma} \succeq 0. \label{eq:momentRelaxedProblemD}
\end{align}
\end{subequations}
Here, the set $\scrS \subsEq \{\scrS_\mathrm{cor}, \scrS_\mathrm{ind}\}$ incorporates the independence and zero mean assumptions on $\scrW_\mathrm{cor}$ and $\scrW_{\mathrm{ind}}$. The matrices $\dvrMax{\Sigma}$ are constrained to be of the form
\begin{align}
  \begin{pmatrix}
    \dvrMax{\Sigma_{\chi\chi}} & \dvrMax{\Sigma_{\chi w}} & \dvrMax{\Sigma_{\chi\hat{w}}}\\
    \dvrMax{\Sigma_{w\chi}} & \dvrMax{\Sigma_{ww}} & \dvrMax{\Sigma_{w\hat{w}}}\\
    \dvrMax{\Sigma_{\hat{w}\chi}} & \dvrMax{\Sigma_{\hat{w}w}} & \dvrMax{\Sigma_{\hat{w}\hat{w}}}
  \end{pmatrix},
  &&
  \begin{pmatrix}
    \dvrMax{\Sigma_{\chi\chi}} & 0 & 0\\
    0 & \dvrMax{\Sigma_{ww}} & \dvrMax{\Sigma_{w\hat{w}}}\\
    0 & \dvrMax{\Sigma_{\hat{w}w}} & \dvrMax{\Sigma_{\hat{w}\hat{w}}}
  \end{pmatrix}, \label{eq:SigmaPartitioning}
\end{align}
for $\dvrMax{\Sigma} \in \scrS_\mathrm{cor}$ or $\dvrMax{\Sigma} \in \scrS_\mathrm{ind}$, respectively. The matrix $\dvrMax{\Sigma_{\chi w}}$ is zero in the case of $\dvrMax{\Sigma} \in \scrS_\text{ind}$, because $\chi(t)$ can be expressed as a linear function of $w(t-1),\ldots,w(0)$ and the latter are, by assumption, zero mean and independent from $w(t)$. Further, $\dvrMax{\Sigma_{\chi \hat{w}}}$ is zero, because $\hat{w}(t)$ is supposed to capture the coupling distribution of the Wasserstein constraint on $\bbP_{w(t)}$ and $\bbP_\text{nom}$ and $w(t)$ is independent of $\chi(t)$ in the case of $\scrW_\mathrm{ind}$.

The transition from \eqref{eq:worstCaseProblemAlternative} to \eqref{eq:momentRelaxedProblem} is based on a moment relaxation of the Wasserstein conditions \eqref{eq:worstCaseProblemAlternativeC} and \eqref{eq:worstCaseProblemAlternativeD}. A comparison of \eqref{eq:momentRelaxedProblemC} and \eqref{eq:momentRelaxedProblemD} with 2) in Theorem~\ref{thm:gelbrichProperties} reveals that this moment relaxation replaces the Wasserstein metric with the Gelbrich distance. At this point, related literature utilizes the fact Theorem~\ref{thm:gelbrichProperties} 1), that the Gelbrich distance equals the Wasserstein distance for normal distributions (compare \cite{kuhn2019wasserstein}). However, since \eqref{eq:finiteHorizonProblem} is an infinite-dimensional problem and the moment relaxation \eqref{eq:momentRelaxedProblem} involves averaging over time, it is necessary to investigate the equivalence of \eqref{eq:worstCaseProblemAlternative} and \eqref{eq:momentRelaxedProblem} in more detail.

To show equivalence of \eqref{eq:worstCaseProblemAlternative} and \eqref{eq:momentRelaxedProblem} also in our scenario, we establish two propositions.

\begin{proposition}
	\label{prop:finiteHorizonUpperBound}
	Let $\dvrC{\calA}$ be stable and $(\scrW,\scrS) \in \{(\scrW_\mathrm{cor},\scrS_\mathrm{cor}), (\scrW_\mathrm{ind},\scrS_\mathrm{ind})\}$. 
  Then, for any sequence of random variables $w \in \scrW$ and any $T \in \bbN$, there exists a moment matrix $\dvrMax{\Sigma} \in \scrS$ satisfying \eqref{eq:momentRelaxedProblemB} to \eqref{eq:momentRelaxedProblemD} with
  \begin{align*}
    \dvrMax{\Sigma_{\chi\chi}} \succeq \Sigma_{\chi\chi}(T),\dvrMax{\Sigma_{\chi w}} = \Sigma_{\chi w}(T), \dvrMax{\Sigma_{\chi \hat{w}}} = \Sigma_{\chi \hat{w}}(T),\\
    \dvrMax{\Sigma_{ww}} = \Sigma_{ww}(T),
    \dvrMax{\Sigma_{w\hat{w}}} = \Sigma_{w\hat{w}}(T),
     \dvrMax{\Sigma_{\hat{w}\hat{w}}} = \Sigma_{\hat{w}\hat{w}}(T).
  \end{align*}
\end{proposition}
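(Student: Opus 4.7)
The plan is to define $\dvrMax{\Sigma}$ by copying every block of the time-averaged moment matrix of $(\chi,w,\hat{w})$ verbatim except for the $\chi\chi$ block, and then to inflate $\dvrMax{\Sigma_{\chi\chi}}$ just enough for the stationarity equation \eqref{eq:momentRelaxedProblemB} to hold exactly. Given $w \in \scrW$, a paired process $\hat{w}$ realizing the Wasserstein coupling is supplied by Theorem~\ref{thm:finiteHorizonProblemEquivalent}, so the time-averaged matrix $\Sigma(T)$ is well-defined and automatically PSD as a covariance.

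The crux is a discrete Lyapunov construction. Expanding $\bbE[\chi(t+1)\chi(t+1)^\top]$ from \eqref{eq:finiteHorizonProblemC}, summing over $t=0,\dots,T-1$, and telescoping (using $\chi(0)=0$) gives
\begin{equation*}
\Sigma_{\chi\chi}(T) + \tfrac{1}{T}\bbE[\chi(T)\chi(T)^\top] = \dvrC{\calA} \Sigma_{\chi\chi}(T) \dvrC{\calA}^\top + \dvrC{\calA}\Sigma_{\chi w}(T) \dvrC{\calB}^\top + \dvrC{\calB}\Sigma_{w\chi}(T) \dvrC{\calA}^\top + \dvrC{\calB}\Sigma_{ww}(T) \dvrC{\calB}^\top.
\end{equation*}
Since $\dvrC{\calA}$ is stable, we let $\dvrMax{\Sigma_{\chi\chi}}$ be the unique solution of the discrete Lyapunov equation obtained by dropping the boundary remainder $\tfrac{1}{T}\bbE[\chi(T)\chi(T)^\top]$. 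Subtracting the two identities, the increment $\Delta := \dvrMax{\Sigma_{\chi\chi}}-\Sigma_{\chi\chi}(T)$ solves $\Delta = \dvrC{\calA}\Delta\dvrC{\calA}^\top + \tfrac{1}{T}\bbE[\chi(T)\chi(T)^\top]$, and stability forces $\Delta \succeq 0$. Consequently $\dvrMax{\Sigma_{\chi\chi}}\succeq \Sigma_{\chi\chi}(T)$, and $\dvrMax{\Sigma}=\Sigma(T)+\mathrm{diag}(\Delta,0,0)\succeq 0$.

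The remaining constraints are routine. Equation \eqref{eq:momentRelaxedProblemB} holds by construction; \eqref{eq:momentRelaxedProblemC} reduces to $\gamma^2 \geq \tfrac{1}{T}\sum_{t=0}^{T-1}\bbE\|w(t)-\hat{w}(t)\|^2$, which is immediate from the per-time Wasserstein coupling furnished by Theorem~\ref{thm:finiteHorizonProblemEquivalent}; and \eqref{eq:momentRelaxedProblemE} follows because averaging $\Sigma_\text{nom}$ in time returns $\Sigma_\text{nom}$. In the independent case we further select the coupling so that $\hat{w}(t)$ depends only on $w(t)$; then $\chi(t)$, being a linear function of the zero-mean independent $w(0),\dots,w(t-1)$, is uncorrelated with both $w(t)$ and $\hat{w}(t)$, yielding $\Sigma_{\chi w}(T)=\Sigma_{\chi\hat{w}}(T)=0$ and placing $\dvrMax{\Sigma}$ in $\scrS_\mathrm{ind}$.

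The only conceptual point is the stability of $\dvrC{\calA}$: it converts the $O(1/T)$ boundary remainder into a PSD correction $\Delta$ that simultaneously lifts $\Sigma_{\chi\chi}(T)$ to a genuine Lyapunov solution and preserves positivity of the full block matrix. Everything else reduces to block-algebraic bookkeeping.
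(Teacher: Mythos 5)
Your proof is correct and follows essentially the same route as the paper: copy all blocks of $\Sigma(T)$ except the $\chi\chi$ block, define $\dvrMax{\Sigma_{\chi\chi}}$ as the solution of the discrete Lyapunov equation, and use the telescoped averaged identity (with the $\tfrac{1}{T}\bbE[\chi(T)\chi(T)^\top]$ remainder) together with stability of $\dvrC{\calA}$ to conclude $\dvrMax{\Sigma_{\chi\chi}}\succeq\Sigma_{\chi\chi}(T)$ and positive semidefiniteness of the full matrix. Your explicit handling of the independent case and of the coupling via Theorem~\ref{thm:finiteHorizonProblemEquivalent} only makes explicit what the paper leaves implicit.
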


\begin{proof}
  We first note that problem \eqref{eq:finiteHorizonProblem} is always feasible.
	The proposition is proven by choosing the candidate sub-matrices
	\begin{align*}
		\dvrMax{\Sigma_{\chi w}} &= \Sigma_{\chi w}(T), \ldots, 
    \dvrMax{\Sigma_{\hat{w}\hat{w}}} = \Sigma_{\hat{w}\hat{w}}(T)
	\end{align*}
	and obtaining the candidate for $\dvrMax{\Sigma_{\chi\chi}}$ as the solution of the Lyapunov equation
	\begin{align}
		\dvrMax{\Sigma_{\chi\chi}} = 
		\begin{pmatrix}
			\dvrC{\calA^\top}\\
			\dvrC{\calB^\top}
		\end{pmatrix}^\top 
		\begin{pmatrix}
			\dvrMax{\Sigma_{\chi\chi}} & \Sigma_{\chi w}(T)\\
			\Sigma_{w \chi}(T) & \Sigma_{ww}(T)
		\end{pmatrix}
		\begin{pmatrix}
			\dvrC{\calA^\top}\\
			\dvrC{\calB^\top}
		\end{pmatrix}. \label{eq:Lyapunov1}
	\end{align}
	Clearly, as $\chi(0) = 0$, $\Sigma_{\chi\chi}(T), \Sigma_{\chi w}(T), \Sigma_{ww}(T)$ satisfy the Lyapunov equation
	\begin{align}
		\frac{T+1}{T}\Sigma_{\chi\chi}(T+1) = 
		\begin{pmatrix}
			\dvrC{\calA^\top}\\
			\dvrC{\calB^\top}
		\end{pmatrix}^\top 
		\begin{pmatrix}
			\Sigma_{\chi\chi}(T) & \Sigma_{\chi w}(T)\\
			\Sigma_{w \chi}(T) & \Sigma_{ww}(T)
		\end{pmatrix}
		\begin{pmatrix}
			\dvrC{\calA^\top}\\
			\dvrC{\calB^\top}
		\end{pmatrix}. \label{eq:Lyapunov2}
	\end{align}
	By subtracting \eqref{eq:Lyapunov2} from \eqref{eq:Lyapunov1}, we obtain
	\begin{align*}
		\dvrMax{\Sigma_{\chi\chi}} - \frac{T+1}{T}\Sigma_{\chi\chi}(T+1) = \dvrC{\calA} (\dvrMax{\Sigma_{\chi\chi}} - \Sigma_{\chi\chi}(T)) \dvrC{\calA^\top} .
	\end{align*}
	Using $\frac{T+1}{T}\Sigma_{\chi\chi}(T+1) = \Sigma_{\chi\chi}(T) + \frac{1}{T} \bbE \chi(T)\chi(T)^\top$ yields
	\begin{align*}
		\dvrMax{\Sigma_{\chi\chi}} - \Sigma_{\chi\chi}(T) - \dvrC{\calA} (\dvrMax{\Sigma_{\chi\chi}} - \Sigma_{\chi\chi}(T)) \dvrC{\calA^\top} = \frac{1}{T} \bbE \chi(T)\chi(T)^\top \succeq 0.
	\end{align*}
	This Lyapunov inequality with the stable matrix $\dvrC{\calA}$ implies $\dvrMax{\Sigma_{\chi\chi}} \succeq \Sigma_{\chi\chi}(T)$.
  Next, we argue that $\dvrMax{\Sigma}$ satisfies the constraints \eqref{eq:momentRelaxedProblemB} to \eqref{eq:momentRelaxedProblemD}. The constraint \eqref{eq:momentRelaxedProblemB} is satisfied by the construction of $\dvrMax{\Sigma_{\chi\chi}}$. The constraint \eqref{eq:momentRelaxedProblemC} is satisfied, since $\dvrMax{\Sigma_{ww}} = \Sigma_{ww}(T)$, $\dvrMax{\Sigma_{w\hat{w}}} = \Sigma_{w\hat{w}}(T)$, and $\dvrMax{\Sigma_{\hat{w}\hat{w}}}= \Sigma_{\hat{w}\hat{w}}(T)$. The constraint \eqref{eq:momentRelaxedProblemD} holds as $\dvrMax{\Sigma_{\chi\chi}} \succeq \Sigma_{\chi\chi}(T)$ and $\Sigma(T) \succeq 0$. Also, it is clear that \eqref{eq:momentRelaxedProblemE} holds, because $\Sigma_\text{nom} = \Sigma_{\hat{w}\hat{w}}(T) = \dvrMax{\Sigma_{\hat{w}\hat{w}}}$.
\end{proof}

\begin{proposition}
  \label{prop:finiteHorizonLowerBound}
  Let $\dvrC{\calA}$ be stable and let $\dvrMax{\Sigma} \in \scrS$ satisfy \eqref{eq:momentRelaxedProblemB} to \eqref{eq:momentRelaxedProblemD}. Now perform the following construction:
  \begin{enumerate}
    \item Construct matrices $F_{w\chi},F_{\hat{w}\chi}$, $F_{wd}$, and $F_{\hat{w}d}$ such that
    \begin{align*}
      \begin{pmatrix}
        F_{w\chi}\\
        F_{\hat{w}\chi}
      \end{pmatrix}
      &=
      \begin{pmatrix}
        \dvrMax{\Sigma_{w\chi}}\\
        \dvrMax{\Sigma_{\hat{w}\chi}}
      \end{pmatrix}
      \dvrMax{\Sigma_{\chi\chi}^\dagger},
      \\
      \begin{pmatrix}
        F_{wd}\\
        F_{\hat{w}d}
      \end{pmatrix}
      \begin{pmatrix}
        F_{wd}\\
        F_{\hat{w}d}
      \end{pmatrix}^\top
      &=
      \begin{pmatrix}
        \dvrMax{\Sigma_{w w}} & \dvrMax{\Sigma_{w\hat{w}}}\\
        \dvrMax{\Sigma_{\hat{w}w}} & \dvrMax{\Sigma_{\hat{w}\hat{w}}}
      \end{pmatrix}
      -
      \begin{pmatrix}
        F_{w\chi}\\
        F_{\hat{w}\chi}
      \end{pmatrix}
      \begin{pmatrix}
        \dvrMax{\Sigma_{\chi w}} & \dvrMax{\Sigma_{\chi\hat{w}}}
      \end{pmatrix}.
    \end{align*}
    \item Define recursively the stochastic processes $w$ and $\hat{w}$ by
    \begin{subequations}
      \label{eq:worstCasePolicy}
    \begin{align}
      \hat{\chi}(0) &\sim \calN(0,\dvrMax{\Sigma_{\chi\chi}}), \label{eq:worstCasePolicyA}\\
      d(t) &\sim \calN(0,I), & t \in \bbN_0,\\
      \begin{pmatrix}
        w(t)\\
        \hat{w}(t)
      \end{pmatrix}
      &=
      \begin{pmatrix}
        F_{w\chi}\\
        F_{\hat{w}\chi}
      \end{pmatrix}
      \hat{\chi}(t)
      +
      \begin{pmatrix}
        F_{wd}\\
        F_{\hat{w}d}
      \end{pmatrix}
      d(t)
      & t \in \bbN_0, \label{eq:worstCasePolicyC}\\
      \hat{\chi}(t+1) &=
      \dvrC{\calA} \hat{\chi}(t) + \dvrC{\calB} w(t) \label{eq:worstCasePolicyD},
      & t \in \bbN_0,
    \end{align}
  \end{subequations}
with the auxiliary random variables $\hat{\chi}(t)$ and $d(t)$ for $t \in \bbN_0$, where $(d(t))_{t \in \bbN_0}, \hat{\chi}(0)$ are assumed to be independent.
  \end{enumerate}
  Then, the following statements hold.
  \begin{enumerate}
    \item The stochastic processes $w$ and $\hat{w}$ satisfies \eqref{eq:worstCaseProblemAlternativeC}, \eqref{eq:worstCaseProblemAlternativeD} and $w \in \scrW$.
    \item The objective value of $(w,\hat{w})$ for the problem \eqref{eq:worstCaseProblemAlternative} is the same as the objective value of $\dvrMax{\Sigma}$ for \eqref{eq:momentRelaxedProblem}.
  \end{enumerate}
\end{proposition}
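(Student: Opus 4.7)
The plan is to show that the processes constructed in \eqref{eq:worstCasePolicy} are jointly Gaussian and strictly stationary, with joint second-moment matrix equal to $\dvrMax{\Sigma}$; statement 1) then follows by reading off the relevant blocks, and statement 2) reduces to a transient-decay argument based on stability of $\dvrC{\calA}$.

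First, I would verify by induction on $t$ that
\begin{align*}
	\bbE \begin{pmatrix}\hat{\chi}(t)\\ w(t)\\ \hat{w}(t)\end{pmatrix}\begin{pmatrix}\hat{\chi}(t)\\ w(t)\\ \hat{w}(t)\end{pmatrix}^{\!\top} = \dvrMax{\Sigma}.
\end{align*}
The base case exploits the pseudoinverse identity $F_{w\chi}\dvrMax{\Sigma_{\chi\chi}} = \dvrMax{\Sigma_{w\chi}}$, which holds because $\dvrMax{\Sigma_{w\chi}}$ lies in the row space of $\dvrMax{\Sigma_{\chi\chi}}$ (a consequence of $\dvrMax{\Sigma}\succeq 0$), and analogously for $\hat{w}$; the Schur-complement definition of $F_{wd},F_{\hat{w}d}$ then produces the correct $(w,\hat{w})$ block once the contribution from $\hat{\chi}(t)$ is subtracted. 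The inductive step uses \eqref{eq:worstCasePolicyD} to write $\bbE\hat{\chi}(t+1)\hat{\chi}(t+1)^\top$ as the right-hand side of the Lyapunov-type identity \eqref{eq:momentRelaxedProblemB}, which equals $\dvrMax{\Sigma_{\chi\chi}}$. Joint Gaussianity is immediate from the linear recursion and the Gaussianity of $\hat{\chi}(0), d(t)$. In the independent case, the zero blocks in \eqref{eq:SigmaPartitioning} force $F_{w\chi} = F_{\hat{w}\chi} = 0$, so $w(t) = F_{wd}d(t)$ inherits mutual independence from the i.i.d.\ sequence $d(t)$ and is automatically independent of the past states, matching the structural requirements of $\scrW_\text{ind}$.

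With the moment identity in hand, statement 1) is essentially a calculation. Constraint \eqref{eq:worstCaseProblemAlternativeC} becomes $\bbE\|w(t)-\hat{w}(t)\|^2 = \tr(\dvrMax{\Sigma_{ww}} - 2\dvrMax{\Sigma_{w\hat{w}}} + \dvrMax{\Sigma_{\hat{w}\hat{w}}}) \leq \gamma^2$ via \eqref{eq:momentRelaxedProblemC}; constraint \eqref{eq:worstCaseProblemAlternativeD} holds because $\hat{w}(t)$ is zero-mean Gaussian with covariance $\dvrMax{\Sigma_{\hat{w}\hat{w}}} = \Sigma_\text{nom}$ by \eqref{eq:momentRelaxedProblemE}; and the Wasserstein bound $\bbW(\bbP_{w(t)},\bbP_\text{nom}) \leq \gamma$ follows directly from Definition~\ref{def:wasserstein}, since $(w(t),\hat{w}(t))$ is itself a feasible coupling of the two marginals.

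The main obstacle I anticipate is statement 2), because the cost \eqref{eq:worstCaseProblemAlternativeA} is evaluated along the genuine closed-loop trajectory $\chi$ with $\chi(0)=0$, whereas my construction runs $\hat{\chi}$ from the stationary distribution $\calN(0,\dvrMax{\Sigma_{\chi\chi}})$. Since both processes obey \eqref{eq:worstCaseProblemAlternativeB} driven by the same $w$, their discrepancy is $\chi(t)-\hat{\chi}(t) = -\dvrC{\calA}^t\hat{\chi}(0)$, which decays exponentially in $L^2$ by stability of $\dvrC{\calA}$. Decomposing $z(t) = \dvrC{\calC}\hat{\chi}(t) + \dvrC{\calD}w(t) + \dvrC{\calC}(\chi(t)-\hat{\chi}(t))$, expanding $\bbE\|z(t)\|^2$, and bounding the cross term by Cauchy--Schwarz, I would show that $\bbE\|z(t)\|^2$ converges to the time-constant quantity $\bbE\|\dvrC{\calC}\hat{\chi}(t) + \dvrC{\calD}w(t)\|^2$, which by the moment identity equals the objective \eqref{eq:momentRelaxedProblemA}. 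The Cesàro average in \eqref{eq:worstCaseProblemAlternativeA} therefore converges to that same value, which gives the desired equality of objectives.
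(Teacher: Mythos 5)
Your proposal is correct and follows essentially the same route as the paper's proof: an induction establishing that the joint second moment of $(\hat{\chi}(t),w(t),\hat{w}(t))$ equals $\dvrMax{\Sigma}$ (via the kernel/pseudoinverse identity $F_{w\chi}\dvrMax{\Sigma_{\chi\chi}}=\dvrMax{\Sigma_{w\chi}}$ and the Lyapunov constraint \eqref{eq:momentRelaxedProblemB}), followed by reading off \eqref{eq:worstCaseProblemAlternativeC}--\eqref{eq:worstCaseProblemAlternativeD}, and an exponential-decay argument for $\chi(t)-\hat{\chi}(t)$ using stability of $\dvrC{\calA}$ to equate the Cesàro-averaged cost with the moment objective. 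Your explicit treatment of the $\scrW_\text{ind}$ case (zero blocks forcing $F_{w\chi}=F_{\hat{w}\chi}=0$, hence $w(t)=F_{wd}d(t)$ i.i.d.) is a slightly more detailed justification of $w\in\scrW$ than the paper gives, but the argument is the same in substance.
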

\begin{proof}
  \textbf{Step 1:} We show that the joint moment matrix of $\hat{\chi}(t),w(t),\hat{w}(t)$ equals $\dvrMax{\Sigma}$.

  Substituting \eqref{eq:worstCasePolicyC} for $w(t)$ and $\hat{w}(t)$ reveals the relation
  \begin{align*}
    \bbE \begin{pmatrix}
      \hat{\chi}(t)\\
      w(t)\\
      \hat{w}(t)
    \end{pmatrix}
    \begin{pmatrix}
      \hat{\chi}(t)\\
      w(t)\\
      \hat{w}(t)
    \end{pmatrix}^\top
    &=
    \begin{pmatrix}
      I\\
      F_{w\chi}\\
      F_{\hat{w}\chi}
    \end{pmatrix}
    \bbE
    \hat{\chi}(t)\hat{\chi}(t)^\top
    \begin{pmatrix}
      I\\
      F_{w\chi}\\
      F_{\hat{w}\chi}
    \end{pmatrix}^\top\\
    &+
    \begin{pmatrix}
      0\\
      F_{wd}\\
      F_{\hat{w}d}
    \end{pmatrix}
    \begin{pmatrix}
      0\\
      F_{wd}\\
      F_{\hat{w}d}
    \end{pmatrix}^\top
  \end{align*}
  for the joint moment matrix of $\hat{\chi}(t),w(t),\hat{w}(t)$. Here, we used $\bbE \hat{\chi}^\top(t)d(t) = 0$, which follows from $\hat{\chi}(t)$ being a function of $d(t-1),\ldots,d(0)$ and $\hat{\chi}(0)$ due to the recursive definition \eqref{eq:worstCasePolicyD}.
  
  Next, for an induction argument, assume that $\bbE \hat{\chi}(t)\hat{\chi}(t)^\top = \dvrMax{\Sigma_{\chi\chi}}$. Then,
  \begin{align}
    \bbE \begin{pmatrix}
      \hat{\chi}(t)\\
      w(t)\\
      \hat{w}(t)
    \end{pmatrix}
    \begin{pmatrix}
      \hat{\chi}(t)\\
      w(t)\\
      \hat{w}(t)
    \end{pmatrix}^\top
    &=
    \begin{pmatrix}
      \dvrMax{\Sigma_{\chi\chi}}\\
      F_{w\chi}\dvrMax{\Sigma_{\chi\chi}}\\
      F_{\hat{w}\chi}\dvrMax{\Sigma_{\chi\chi}}
    \end{pmatrix}
    \begin{pmatrix}
      I\\
      F_{w\chi}\\
      F_{\hat{w}\chi}
    \end{pmatrix}^\top
    +
    \begin{pmatrix}
      0\\
      F_{wd}\\
      F_{\hat{w}d}
    \end{pmatrix}
    \begin{pmatrix}
      0\\
      F_{wd}\\
      F_{\hat{w}d}
    \end{pmatrix}^\top . \label{eq:worstCasePolicyMoment}
  \end{align}
  We point out that positive semi-definiteness of $\dvrMax{\Sigma}$ implies $\ker \dvrMax{\Sigma_{\chi\chi}} \subsEq \ker\dvrMax{\Sigma_{w\chi}}$ and $\ker \dvrMax{\Sigma_{\chi\chi}} \subsEq \ker\dvrMax{\Sigma_{\hat{w}\chi}}$. Consequently, we infer
  \begin{align*}
    F_{w\chi}\dvrMax{\Sigma_{\chi\chi}} = \dvrMax{\Sigma_{w\chi}}, \quad F_{\hat{w}\chi}\dvrMax{\Sigma_{\chi\chi}} = \dvrMax{\Sigma_{\hat{w}\chi}}
  \end{align*}
  by 1) in Proposition~\ref{prop:finiteHorizonLowerBound}. Therefore, \eqref{eq:worstCasePolicyMoment} reads
  \begin{align*}
    \bbE \begin{pmatrix}
      \hat{\chi}(t)\\
      w(t)\\
      \hat{w}(t)
    \end{pmatrix}
    \begin{pmatrix}
      \hat{\chi}(t)\\
      w(t)\\
      \hat{w}(t)
    \end{pmatrix}^\top
    &=
    \begin{pmatrix}
      \dvrMax{\Sigma_{\chi\chi}}\\
      \dvrMax{\Sigma_{w\chi}}\\
      \dvrMax{\Sigma_{\hat{w}\chi}}
    \end{pmatrix}
    \begin{pmatrix}
      I\\
      F_{w\chi}\\
      F_{\hat{w}\chi}
    \end{pmatrix}^\top
    +
    \begin{pmatrix}
      0\\
      F_{wd}\\
      F_{\hat{w}d}
    \end{pmatrix}
    \begin{pmatrix}
      0\\
      F_{wd}\\
      F_{\hat{w}d}
    \end{pmatrix}^\top ,
  \end{align*}
  which is equal to $\dvrMax{\Sigma}$. Next, recalling the recursive definition \eqref{eq:worstCasePolicyD} of $\hat{\chi}$, we obtain
  \begin{align*}
    \bbE \hat{\chi}(t+1) \hat{\chi}(t+1)^\top = \begin{pmatrix}
      \dvrC{\calA} & \dvrC{\calB} & 0
    \end{pmatrix} \dvrMax{\Sigma} \begin{pmatrix}
      \dvrC{\calA} & \dvrC{\calB} & 0
    \end{pmatrix}^\top \overset{\eqref{eq:momentRelaxedProblemB}}{=} \dvrMax{\Sigma_{\chi\chi}}.
  \end{align*}
  Consequently, with \eqref{eq:worstCasePolicyA} being the start of an induction argument on $t$, it follows that $\bbE \hat{\chi}(t) \hat{\chi}(t)^\top = \dvrMax{\Sigma_{\chi\chi}}$ for all $t \in \bbN_0$. This completes the proof of Step 1.

  \noindent \textbf{Step 2:} The stochastic processes $w,\hat{w}$ satisfy \eqref{eq:worstCaseProblemAlternativeC} and \eqref{eq:worstCaseProblemAlternativeD}.

  Since the joint moment matrix of $\hat{\chi}(t),w(t),\hat{w}(t)$ equals $\dvrMax{\Sigma}$, the joint moment matrix of $w(t),\hat{w}(t)$ is given by
  \begin{align*}
    \begin{pmatrix}
      \dvrMax{\Sigma_{ww}} & \dvrMax{\Sigma_{w\hat{w}}}\\
      \dvrMax{\Sigma_{\hat{w}w}} & \dvrMax{\Sigma_{\hat{w}\hat{w}}}
    \end{pmatrix}.
  \end{align*}
  Consequently, \eqref{eq:momentRelaxedProblemC} implies that the stochastic processes $w$ and $\hat{w}$ satisfy \eqref{eq:worstCaseProblemAlternativeC}. In addition, each $\hat{w}(t)$ is a zero mean normal random variable by construction. Hence, it has second moment matrix $\dvrMax{\Sigma_{\hat{w}\hat{w}}}$ and satisfies \eqref{eq:worstCaseProblemAlternativeD}.

  \noindent \textbf{Step 3:} Apply $w$ to the closed-loop \eqref{eq:finiteHorizonProblemC}. Then the moments of $\chi(t) - \hat{\chi}(t)$ converge to zero exponentially as $t \to \infty$.

  If $(w,\hat{w})$ is employed in \eqref{eq:worstCaseProblemAlternative}, a stochastic process $\chi$ is generated. By subtracting \eqref{eq:worstCasePolicyD} from \eqref{eq:worstCaseProblemAlternativeB}, we obtain
  \begin{align*}
    \chi(t+1) - \hat{\chi}(t+1) = \dvrC{\calA} (\chi(t) - \hat{\chi}(t)).
  \end{align*}
  Since $\dvrC{\calA}$ is stable and $\chi(0) - \hat{\chi}(0)$ is Gaussian (by \eqref{eq:worstCasePolicyA}, \eqref{eq:worstCaseProblemAlternativeE}), $\chi(t) - \hat{\chi}(t)$ converges to zero for any initial condition and its second moment also converges to zero exponentially as $t \to \infty$.

  \noindent \textbf{Step 4:} The objective value of $(w,\hat{w})$ for the problem \eqref{eq:worstCaseProblemAlternative} is the same as the objective value of $\dvrMax{\Sigma}$ for the problem \eqref{eq:momentRelaxedProblem}.

  Since the joint moment matrix of $\hat{\chi}(t),w(t),\hat{w}(t)$ equals $\dvrMax{\Sigma}$ and $\chi(t)-\hat{\chi}(t)$ and its second moment converge to zero exponentially as $t \to \infty$, the joint moment matrix of $\chi(t),w(t),\hat{w}(t)$ converges to $\dvrMax{\Sigma}$ exponentially. In this case, the asymptotic averaged cost \eqref{eq:worstCaseProblemAlternativeA} is
  \begin{align*}
   \frac{1}{T}\sum_{t = 0}^{T-1} \bbE \left\| z(t) \right\|^2
    &=
    \tr\left(
  \begin{pmatrix}
    \dvrC{\calC} & \dvrC{\calD} & 0
  \end{pmatrix}
  \Sigma(T)
  \begin{pmatrix}
    \dvrC{\calC} & \dvrC{\calD} & 0
  \end{pmatrix}^\top
  \right)\\
  &\overset{T \to \infty}{=}
  \tr\left(
  \begin{pmatrix}
    \dvrC{\calC} & \dvrC{\calD} & 0
  \end{pmatrix}
  \dvrMax{\Sigma}
  \begin{pmatrix}
    \dvrC{\calC} & \dvrC{\calD} & 0
  \end{pmatrix}^\top
  \right).
  \end{align*}
\end{proof}

With Proposition~\ref{prop:finiteHorizonUpperBound} and Proposition~\ref{prop:finiteHorizonLowerBound}, we can now prove the equivalence between \eqref{eq:worstCaseProblemAlternative} and \eqref{eq:momentRelaxedProblem}.

\begin{theorem}
  \label{thm:exactnessMomentRelaxation}
  Let $\dvrC{\calA}$ be stable. Then, the optimization problems \eqref{eq:worstCaseProblemAlternative} and \eqref{eq:momentRelaxedProblem} are feasible and their optimal values coincide.
\end{theorem}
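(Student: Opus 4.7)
The plan is to prove the theorem by chaining Propositions~\ref{prop:finiteHorizonUpperBound} and~\ref{prop:finiteHorizonLowerBound}, which supply the two opposite inequalities between the optimal values, together with a short feasibility argument. Feasibility of \eqref{eq:worstCaseProblemAlternative} is obtained by exhibiting one explicit feasible pair, e.g.\ $w(t)=\hat{w}(t)$ with $\hat{w}$ an i.i.d.\ sequence drawn from $\bbP_\text{nom}$, which satisfies \eqref{eq:worstCaseProblemAlternativeC}--\eqref{eq:worstCaseProblemAlternativeD} trivially and additionally lies in $\scrW_\mathrm{ind}$. Feeding this $w$ into Proposition~\ref{prop:finiteHorizonUpperBound} then produces a feasible $\dvrMax{\Sigma}$ for \eqref{eq:momentRelaxedProblem}, so both problems are feasible.

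To establish that the optimal value of \eqref{eq:worstCaseProblemAlternative} is at most the optimal value of \eqref{eq:momentRelaxedProblem}, I would fix an arbitrary feasible $w \in \scrW$ (together with its coupling $\hat{w}$ provided by Theorem~\ref{thm:finiteHorizonProblemEquivalent}) and, for each horizon $T \in \bbN$, invoke Proposition~\ref{prop:finiteHorizonUpperBound} to obtain a moment matrix $\dvrMax{\Sigma}(T) \in \scrS$ feasible for \eqref{eq:momentRelaxedProblem} whose $\chi w$, $\chi\hat{w}$, $ww$, $w\hat{w}$ and $\hat{w}\hat{w}$ blocks agree with those of $\Sigma(T)$ while $\dvrMax{\Sigma_{\chi\chi}}(T) \succeq \Sigma_{\chi\chi}(T)$. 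The finite-horizon cost $\frac{1}{T}\sum_{t=0}^{T-1} \bbE\|z(t)\|^2$ equals the trace expression from \eqref{eq:momentRelaxedProblemA} evaluated at $\Sigma(T)$, and since that expression depends on the $\chi\chi$ block through the positive semi-definite quadratic form $\dvrC{\calC}\,\Sigma_{\chi\chi}\,\dvrC{\calC}^\top$, the PSD ordering above turns it into an inequality bounding the cost by the objective of \eqref{eq:momentRelaxedProblem} at $\dvrMax{\Sigma}(T)$. Bounding this in turn by the optimal value of \eqref{eq:momentRelaxedProblem}, passing to $\limsup_{T \to \infty}$, and finally taking the supremum over $w \in \scrW$ produces the desired inequality.

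For the reverse inequality, I would take any feasible $\dvrMax{\Sigma}$ of \eqref{eq:momentRelaxedProblem} and apply Proposition~\ref{prop:finiteHorizonLowerBound}: it delivers a pair $(w,\hat{w})$ that is feasible for \eqref{eq:worstCaseProblemAlternative} (and in particular $w \in \scrW$) with objective value exactly equal to the value of the moment problem at $\dvrMax{\Sigma}$. Taking the supremum over $\dvrMax{\Sigma}$ then yields that the optimal value of \eqref{eq:worstCaseProblemAlternative} is at least that of \eqref{eq:momentRelaxedProblem}, and combining both inequalities completes the proof.

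I do not anticipate a substantive obstacle, as the two propositions have done the heavy lifting. The only step that warrants care is the monotonicity argument in the upper-bound direction, namely converting $\dvrMax{\Sigma_{\chi\chi}} \succeq \Sigma_{\chi\chi}(T)$ together with the equalities on the remaining blocks into an inequality between the two trace expressions; this is a one-line consequence of the cyclic trace identity and PSD ordering, but it is precisely where the inequality (rather than equality) in Proposition~\ref{prop:finiteHorizonUpperBound} is used and should be stated explicitly.
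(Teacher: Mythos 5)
Your proposal is correct and follows essentially the same route as the paper: feasibility via the i.i.d.\ nominal disturbance, the upper bound on the value of \eqref{eq:worstCaseProblemAlternative} from Proposition~\ref{prop:finiteHorizonUpperBound} (with the trace-monotonicity step you spell out explicitly, which the paper leaves implicit), and the reverse inequality from Proposition~\ref{prop:finiteHorizonLowerBound}. The only cosmetic difference is that the paper exhibits a feasible $\dvrMax{\Sigma}$ for \eqref{eq:momentRelaxedProblem} directly as a Lyapunov-equation solution, whereas you obtain one by applying Proposition~\ref{prop:finiteHorizonUpperBound} to the nominal disturbance; both are valid.
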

\begin{proof}
  We first point out that given a solution $\dvrMax{\Sigma_{\chi\chi}}$ to the Lyapunov equation $\dvrMax{\Sigma_{\chi\chi}} = \dvrC{\calA}\dvrMax{\Sigma_{\chi\chi}} \dvrC{\calA^\top} + \dvrC{\calB}\Sigma_{\text{nom}}\dvrC{\calB^\top}$, $\dvrMax{\Sigma} = \diag\left(\dvrMax{\Sigma_{\chi\chi}}, \begin{pmatrix} \Sigma_\text{nom} & \Sigma_\text{nom}\\ \Sigma_\text{nom} & \Sigma_\text{nom} \end{pmatrix}\right)$ is a feasible candidate for \eqref{eq:momentRelaxedProblem}. Moreover, if $\dvrC{\calA}$ is stable, then $w = \hat{w}$ with $\hat{w}(t) \sim \bbP_\text{nom}$ i.i.d. for all $t \in \bbN_0$ is a feasible candidate for \eqref{eq:worstCaseProblemAlternative}.

  Next, if $(\chi,w,\hat{w})$ is a feasible candidate for \eqref{eq:worstCaseProblemAlternative}, then, by Proposition~\ref{prop:finiteHorizonUpperBound}, every member of the sequence $\Sigma(T)$ of averaged moments is bounded from above by a stationary moment matrix $\dvrMax{\Sigma}$ that satisfies \eqref{eq:momentRelaxedProblemB} to \eqref{eq:momentRelaxedProblemD}. Since the stochastic process $(\chi,w,\hat{w})$ is an arbitrary feasible candidate of \eqref{eq:worstCaseProblemAlternative}, this implies that the optimal value of \eqref{eq:momentRelaxedProblem} is larger than or equal to the optimal value of \eqref{eq:worstCaseProblemAlternative}.
  
  On the other hand, if $\dvrMax{\Sigma}$ is a feasible candidate for \eqref{eq:momentRelaxedProblem}, then Proposition~\ref{prop:finiteHorizonLowerBound} establishes the existence of a feasible stochastic process $(\chi,w,\hat{w})$ with the same value for \eqref{eq:worstCaseProblemAlternative} as $\dvrMax{\Sigma}$ has for \eqref{eq:momentRelaxedProblem}. This implies that the optimal value of \eqref{eq:worstCaseProblemAlternative} is larger than or equal to the optimal value of \eqref{eq:momentRelaxedProblem}.
\end{proof}

According to Theorem~\ref{thm:exactnessMomentRelaxation}, we can utilize the moment relaxation problem \eqref{eq:momentRelaxedProblem} to exactly analyze the worst-case performance of a controller \eqref{eq:finiteHorizonProblemC}. However, if we were willing to minimize this worst-case performance over the controller parameters, we would be confronted with a minimax optimization problem, which is notoriously hard to solve. For this reason, we study the Lagrangian dual of \eqref{eq:momentRelaxedProblem}, which is a minimization problem, in the next section.

\subsection{Robust performance analysis for correlated disturbances}
\label{subsec:robustPerformanceAnalysisCor}

In this section, we derive the dual problem of problem \eqref{eq:momentRelaxedProblem} for the case of correlated disturbances. This dual takes the form of a robust performance analysis problem typically obtained from dissipativity theory.

\begin{theorem}
  \label{thm:performanceAnalysisCor}
  Let $\dvrC{\calA}$ be stable. Then, for the case $\scrS = \scrS_\text{cor}$, \eqref{eq:momentRelaxedProblem} is equivalent to the optimization problem
  \begin{subequations}
    \label{eq:performanceAnalysisCor}
  \begin{align}
    &\minimize_{\dvr{\lambda} \geq 0,\dvr{\calP}, \dvr{Q}} ~~ \tr\left( \dvr{Q} \Sigma_\text{nom} \right) + \dvr{\lambda}\gamma^2 \label{eq:performanceAnalysisCorA}\\
    &\mathrm{s.t.} ~~ 
    \begin{pmatrix}
      \bullet
    \end{pmatrix}^\top
    \begin{pmatrix}
      -\dvr{\calP} & 0 & 0 & 0 & 0\\
      0 & \dvr{\calP} & 0 & 0 & 0\\
      0 & 0 & I & 0 & 0\\
      0 & 0 & 0 & -\dvr{\lambda}I & \dvr{\lambda}I\\
      0 & 0 & 0 & \dvr{\lambda}I & -\dvr{Q} - \dvr{\lambda}I
    \end{pmatrix}
    \begin{pmatrix}
      I & 0 & 0\\
      \dvrC{\calA} & \dvrC{\calB} & 0\\
      \dvrC{\calC} & \dvrC{\calD} & 0\\
      0 & I & 0\\
      0 & 0 & I
    \end{pmatrix} \prec 0. \label{eq:performanceAnalysisCorB}
  \end{align}
\end{subequations}
Moreover, the stability of $\dvrC{\calA}$ guarantees the feasibility of \eqref{eq:performanceAnalysisCor}.
\end{theorem}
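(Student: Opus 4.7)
The plan is to obtain \eqref{eq:performanceAnalysisCor} as the Lagrangian dual of the SDP \eqref{eq:momentRelaxedProblem} and to close the duality gap via Slater's condition on the dual. I would introduce a symmetric multiplier $\dvr{\calP}$ for the Lyapunov equality \eqref{eq:momentRelaxedProblemB}, a scalar $\dvr{\lambda}\ge 0$ for the Wasserstein inequality \eqref{eq:momentRelaxedProblemC}, and a symmetric multiplier $\dvr{Q}$ for the nominal equality \eqref{eq:momentRelaxedProblemE}, keeping $\dvrMax{\Sigma}\succeq 0$ as an explicit side constraint. After collecting $\dvrMax{\Sigma}$-linear terms, the Lagrangian takes the form $\tr(\dvrMax{\Sigma}\,N) + \dvr{\lambda}\gamma^2 + \tr(\dvr{Q}\Sigma_\text{nom})$, and a direct block expansion identifies $N$ with the triple product on the left-hand side of \eqref{eq:performanceAnalysisCorB} (once consistent sign conventions for the multipliers are chosen). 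The supremum over $\dvrMax{\Sigma}\succeq 0$ is finite exactly when $N\preceq 0$, so the Lagrangian dual of \eqref{eq:momentRelaxedProblem} is the non-strict version of \eqref{eq:performanceAnalysisCor}. A standard convex-combination argument with any strictly feasible point then shows that the infimum over the strict LMI agrees with the infimum over its non-strict closure, so the optimal values of \eqref{eq:momentRelaxedProblem} and \eqref{eq:performanceAnalysisCor} will coincide as soon as the strict LMI is feasible.

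Strong duality and the stated feasibility claim therefore both reduce to producing a single triple $(\dvr{\calP},\dvr{\lambda},\dvr{Q})$ satisfying \eqref{eq:performanceAnalysisCorB}. Expanding the triple product, this LMI reads
\begin{equation*}
\begin{pmatrix}
\dvrC{\calA}^\top \dvr{\calP}\dvrC{\calA}-\dvr{\calP}+\dvrC{\calC}^\top\dvrC{\calC} & \dvrC{\calA}^\top \dvr{\calP}\dvrC{\calB}+\dvrC{\calC}^\top\dvrC{\calD} & 0\\
\star & \dvrC{\calB}^\top \dvr{\calP}\dvrC{\calB}+\dvrC{\calD}^\top\dvrC{\calD}-\dvr{\lambda}I & \dvr{\lambda}I\\
0 & \dvr{\lambda}I & -\dvr{Q}-\dvr{\lambda}I
\end{pmatrix}\prec 0.
\end{equation*}
Since $\dvrC{\calA}$ is stable, I would fix $\dvr{\calP}\succ 0$ as a strict solution of the Lyapunov inequality $\dvrC{\calA}^\top\dvr{\calP}\dvrC{\calA}-\dvr{\calP}+\dvrC{\calC}^\top\dvrC{\calC}\prec 0$, which renders the $(1,1)$ block strictly negative definite. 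Setting $\dvr{Q}=qI$, a Schur complement with respect to the $(3,3)$ block converts the LMI into a $2\times 2$ block inequality whose $(2,2)$ entry becomes $\dvrC{\calB}^\top\dvr{\calP}\dvrC{\calB}+\dvrC{\calD}^\top\dvrC{\calD}-\tfrac{\dvr{\lambda}q}{\dvr{\lambda}+q}I$. A second Schur complement on the strictly negative $(1,1)$ block then reduces strict feasibility to dominating a bounded, $\dvr{\calP}$-dependent positive-semidefinite correction by this $(2,2)$ entry, which is achieved by sending $\dvr{\lambda}$ and $q$ jointly to infinity.

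The algebraic matching between the Lagrangian coefficient and the triple product in \eqref{eq:performanceAnalysisCorB} is routine once sign conventions are fixed, and the reduction from non-strict to strict LMI is standard SDP technology. The main obstacle is the feasibility construction itself: none of the diagonal blocks of the expanded LMI can be dominated by a single multiplier, so a cascade of two Schur complements is needed to allocate $\dvr{\calP}$, $\dvr{\lambda}$ and $\dvr{Q}$ sequentially, with the stability of $\dvrC{\calA}$ entering only at the first step through the existence of a strict discrete-time Lyapunov solution.
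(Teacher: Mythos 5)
Your proposal is correct and takes essentially the same route as the paper: Lagrangian duality identifying the triple product in \eqref{eq:performanceAnalysisCorB} with the coefficient of $\dvrMax{\Sigma}$, followed by a Slater-type strict feasibility argument that obtains $\dvr{\calP}$ from a discrete-time Lyapunov relation (using stability of $\dvrC{\calA}$) and then takes $\dvr{\lambda}$ and $\dvr{Q}$ large. The only cosmetic differences are the direction of dualization (you dualize \eqref{eq:momentRelaxedProblem} rather than \eqref{eq:performanceAnalysisCor}) and that you allocate $\dvr{\lambda}$ and $\dvr{Q}=qI$ jointly via two Schur complements instead of sequentially; both variants are valid.
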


\begin{proof}
  \noindent \textbf{Step 1:} Lagrangian duality. 
  
  The dual optimization problem corresponding to \eqref{eq:performanceAnalysisCor} is
  \begin{align*}
    &\sup_{\dvrMax{\Sigma} \succeq 0} \inf_{\dvr{\lambda} \geq 0, \dvr{Q},\dvr{\calP}} ~\tr\left( \dvr{Q} \Sigma_\text{nom} \right) + \dvr{\lambda}\gamma^2 +\\
    &
    \tr
    \Bigg( 
      \begin{pmatrix}
        \bullet
      \end{pmatrix}^\top
      \begin{pmatrix}
        -\dvr{\calP} & 0 & 0 & 0 & 0\\
        0 & \dvr{\calP} & 0 & 0 & 0\\
        0 & 0 & I & 0 & 0\\
        0 & 0 & 0 & -\dvr{\lambda}I & \dvr{\lambda}I\\
        0 & 0 & 0 & \dvr{\lambda}I & -\dvr{Q} - \dvr{\lambda}I
      \end{pmatrix}
      \begin{pmatrix}
        I & 0 & 0\\
        \dvrC{\calA} & \dvrC{\calB} & 0\\
        \dvrC{\calC} & \dvrC{\calD} & 0\\
        0 & I & 0\\
        0 & 0 & I
      \end{pmatrix} \dvrMax{\Sigma} \Bigg) ,
  \end{align*}
  where $\dvrMax{\Sigma}$ is the Lagrange multiplier for the constraint \eqref{eq:performanceAnalysisCorB}. By rearranging terms and partitioning $\dvrMax{\Sigma}$ as in \eqref{eq:SigmaPartitioning}, we rewrite the Lagrangian dual problem as
  \begin{align*}
    \sup_{\dvrMax{\Sigma} \succeq 0} & \inf_{\dvr{\lambda} \geq 0, \dvr{Q},\dvr{\calP}} ~ \tr\left(
      \begin{pmatrix}
        \dvrC{\calC} & \dvrC{\calD} & 0
      \end{pmatrix}
      \dvrMax{\Sigma}
      \begin{pmatrix}
        \dvrC{\calC} & \dvrC{\calD} & 0
      \end{pmatrix}^\top
      \right)\\
      & +
      \tr( \dvr{\calP}
      \left(\begin{pmatrix}
        \dvrC{\calA} & \dvrC{\calB} & 0
      \end{pmatrix}
      \dvrMax{\Sigma}
      \begin{pmatrix}
        \dvrC{\calA} & \dvrC{\calB} & 0
      \end{pmatrix}^\top - \dvrMax{\Sigma_{\chi\chi}} \right) )
      \\
      &+
      \dvr{\lambda} \gamma^2
      -
      \dvr{\lambda}
      \tr\left(
      \begin{pmatrix}
        0 & I & 0\\
        0 & 0 & I
      \end{pmatrix}
      \dvrMax{\Sigma}
      \begin{pmatrix}
        0 & I & 0\\
        0 & 0 & I
      \end{pmatrix}^\top
      \begin{pmatrix}
        I & -I\\
        -I & I
      \end{pmatrix}
      \right)
      \\
      & + \tr\left( \dvr{Q} \left(\Sigma_\text{nom} -\begin{pmatrix}
        0 & 0 & I
      \end{pmatrix}
      \dvrMax{\Sigma}
      \begin{pmatrix}
        0 & 0 & I
      \end{pmatrix}^\top\right) \right).
  \end{align*}
  Here, we see that if any of the constraints \eqref{eq:momentRelaxedProblemB}, \eqref{eq:momentRelaxedProblemC}, or \eqref{eq:momentRelaxedProblemE} are violated, then $\dvr{\calP}$, $\dvr{\lambda}$, or $\dvr{Q}$ can be chosen such that the objective value is unbounded from below. On the other hand, if \eqref{eq:momentRelaxedProblemB}, \eqref{eq:momentRelaxedProblemC}, and \eqref{eq:momentRelaxedProblemE} hold, then optimal choices for $\dvr{\calP}$, $\dvr{\lambda}$, and $\dvr{Q}$ are given by zero.
  
  Consequently, \eqref{eq:momentRelaxedProblem} is equivalent to the dual of \eqref{eq:performanceAnalysisCor}. By weak duality, the optimal value of \eqref{eq:momentRelaxedProblem} is smaller than or equal to the optimal value of \eqref{eq:performanceAnalysisCor}.

  \noindent \textbf{Step 2:} Strong duality and feasibility.

  To show that strong duality holds, we verify Slater's condition by proving that \eqref{eq:performanceAnalysisCor} and $\dvr{\lambda} \geq 0$ are strictly feasible.

  It helps to rewrite the constraint \eqref{eq:performanceAnalysisCorB} as
  \begin{align*}
    \begin{pmatrix}
      \dvrC{\calA^\top}\dvr{\calP} \dvrC{\calA} - \dvr{\calP} + \dvrC{\calC^\top} \dvrC{\calC} & \dvrC{\calA^\top}\dvr{\calP} \dvrC{\calB} + \dvrC{\calC^\top} \dvrC{\calD} & 0\\
      \dvrC{\calB^\top}\dvr{\calP} \dvrC{\calA} + \dvrC{\calD^\top} \dvrC{\calC} & \dvrC{\calB^\top}\dvr{\calP} \dvrC{\calB} + \dvrC{\calD^\top} \dvrC{\calD} - \dvr{\lambda}I & \dvr{\lambda}I\\
      0 & \dvr{\lambda} I & -\dvr{\lambda}I - \dvr{Q}
    \end{pmatrix} \prec 0.
  \end{align*}
  Then, we choose $\dvr{\calP}$ as the solution to the Lyapunov equation
  \begin{align*}
    \dvrC{\calA^\top} \dvr{\calP} \dvrC{\calA} - \dvr{\calP} + \dvrC{\calC^\top} \dvrC{\calC} = -I.
  \end{align*}
  Subsequently pick $\dvr{\lambda}$ large enough such that the left upper $2\times 2$ block of \eqref{eq:performanceAnalysisCorB} above is negative definite, and finally make $\dvr{Q}$ large enough such that the whole matrix is negative definite.
\end{proof}

\subsection{Robust performance analysis for independent disturbances}
\label{subsec:robustPerformanceAnalysisInd}

In this section, we derive the dual problem of \eqref{eq:momentRelaxedProblem} for the case of independent disturbances. As in Section \ref{subsec:robustPerformanceAnalysisCor}, this dual takes the form of a robust performance analysis problem typically found in dissipativity theory.

\begin{theorem}
  \label{thm:performanceAnalysisInd}
  Let $\dvrC{\calA}$ be stable. 
  Consider the case $\scrS = \scrS_\text{ind}$. In this case, \eqref{eq:momentRelaxedProblem} is equivalent to the optimization problem
  \begin{subequations}
    \label{eq:performanceAnalysisInd}
  \begin{align}
    &\minimize_{\dvr{\lambda} \geq 0,\dvr{\calP}, \dvr{Q}} ~~ \tr\left( \dvr{Q} \Sigma_\text{nom} \right) + \dvr{\lambda}\gamma^2 \label{eq:performanceAnalysisIndA}\\
    &\mathrm{s.t.} ~~ 
    \dvrC{\calA^\top} \dvr{\calP} \dvrC{\calA} - \dvr{\calP} + \dvrC{\calC^\top} \dvrC{\calC} \prec 0,\label{eq:performanceAnalysisIndB}\\
    &
    \begin{pmatrix}
      \dvrC{\calB} & 0\\
      \dvrC{\calD} & 0\\
      I & 0\\
      0 & I
    \end{pmatrix}^\top 
    \begin{pmatrix}
      \dvr{\calP} & 0 & 0 & 0\\
      0 & I & 0 & 0\\
      0 & 0 & -\dvr{\lambda}I & \dvr{\lambda}I\\
      0 & 0 & \dvr{\lambda}I & -\dvr{Q} - \dvr{\lambda}I
    \end{pmatrix}
    \begin{pmatrix}
      \dvrC{\calB} & 0\\
      \dvrC{\calD} & 0\\
      I & 0\\
      0 & I
    \end{pmatrix} \prec 0.\label{eq:performanceAnalysisIndC}
  \end{align}
\end{subequations}
The stability of $\dvrC{\calA}$ further guarantees the feasibility of \eqref{eq:performanceAnalysisInd}.
\end{theorem}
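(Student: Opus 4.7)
The plan is to mirror the proof of Theorem~\ref{thm:performanceAnalysisCor}, i.e.\ derive \eqref{eq:performanceAnalysisInd} as the Lagrangian dual of \eqref{eq:momentRelaxedProblem}, now exploiting the block structure of $\dvrMax{\Sigma} \in \scrS_\mathrm{ind}$ in \eqref{eq:SigmaPartitioning}. Since $\dvrMax{\Sigma_{\chi w}} = 0$ and $\dvrMax{\Sigma_{\chi \hat w}} = 0$ in the independent case, the objective \eqref{eq:momentRelaxedProblemA} reduces to $\tr(\dvrC{\calC}\dvrMax{\Sigma_{\chi\chi}}\dvrC{\calC^\top}) + \tr(\dvrC{\calD}\dvrMax{\Sigma_{ww}}\dvrC{\calD^\top})$, the equality \eqref{eq:momentRelaxedProblemB} simplifies to the Lyapunov equation $\dvrMax{\Sigma_{\chi\chi}} = \dvrC{\calA}\dvrMax{\Sigma_{\chi\chi}}\dvrC{\calA^\top} + \dvrC{\calB}\dvrMax{\Sigma_{ww}}\dvrC{\calB^\top}$, and \eqref{eq:momentRelaxedProblemC}, \eqref{eq:momentRelaxedProblemE} only involve the $(w,\hat w)$-block, while \eqref{eq:momentRelaxedProblemD} splits into $\dvrMax{\Sigma_{\chi\chi}} \succeq 0$ and positive semi-definiteness of the $(w,\hat w)$-block.

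\textbf{Step 1 (Lagrangian split).} Following the scheme in Theorem~\ref{thm:performanceAnalysisCor}, I introduce multipliers $\dvr{\calP}$ for the Lyapunov equality, $\dvr{\lambda} \geq 0$ for \eqref{eq:momentRelaxedProblemC}, and $\dvr{Q}$ for \eqref{eq:momentRelaxedProblemE}, and write the corresponding $\sup_{\dvrMax{\Sigma}\succeq 0}\inf_{\dvr{\lambda}\geq 0,\dvr{\calP},\dvr{Q}}$ form. Because no term couples $\dvrMax{\Sigma_{\chi\chi}}$ with the $(w,\hat w)$-block, the Lagrangian separates additively into a trace against $\dvrMax{\Sigma_{\chi\chi}}$ with coefficient $\dvrC{\calA^\top}\dvr{\calP}\dvrC{\calA} - \dvr{\calP} + \dvrC{\calC^\top}\dvrC{\calC}$, a trace against the $(w,\hat w)$-block with coefficient
\[
M := \begin{pmatrix} \dvrC{\calB^\top}\dvr{\calP}\dvrC{\calB} + \dvrC{\calD^\top}\dvrC{\calD} - \dvr{\lambda}I & \dvr{\lambda}I \\ \dvr{\lambda}I & -\dvr{Q} - \dvr{\lambda}I \end{pmatrix},
\]
and the constant $\dvr{\lambda}\gamma^2 + \tr(\dvr{Q}\Sigma_\mathrm{nom})$. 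The inner infimum is $-\infty$ unless the primal constraints \eqref{eq:momentRelaxedProblemB}, \eqref{eq:momentRelaxedProblemC}, \eqref{eq:momentRelaxedProblemE} hold; conversely, the Lagrangian is bounded above over $\dvrMax{\Sigma_{\chi\chi}} \succeq 0$ and over positive semi-definite $(w,\hat w)$-blocks iff both coefficient matrices are $\preceq 0$. A direct expansion shows that $M$ coincides with the triple product in \eqref{eq:performanceAnalysisIndC}, and the condition on $\dvrMax{\Sigma_{\chi\chi}}$ is \eqref{eq:performanceAnalysisIndB}. Imposing strictness for Slater's condition gives exactly \eqref{eq:performanceAnalysisIndB}--\eqref{eq:performanceAnalysisIndC}.

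\textbf{Step 2 (Strong duality and feasibility).} To invoke Slater's condition, I first solve $\dvrC{\calA^\top}\dvr{\calP}\dvrC{\calA} - \dvr{\calP} + \dvrC{\calC^\top}\dvrC{\calC} = -I$, which has a positive semi-definite solution because $\dvrC{\calA}$ is stable, so that \eqref{eq:performanceAnalysisIndB} holds strictly. Fixing this $\dvr{\calP}$, I then choose $\dvr{\lambda} > 0$ large enough that the $(1,1)$-block $\dvrC{\calB^\top}\dvr{\calP}\dvrC{\calB} + \dvrC{\calD^\top}\dvrC{\calD} - \dvr{\lambda}I$ of $M$ is strictly negative definite, and finally $\dvr{Q} = \nu I$ with $\nu$ large; a Schur-complement computation on $M$ (using that $-\dvr{Q}-\dvr{\lambda}I \prec 0$ for $\nu > 0$) yields $M \prec 0$. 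With $\dvr{\lambda} > 0$ and both LMIs strict, Slater's condition holds, so weak duality becomes strong duality and the optimal values coincide, proving equivalence of \eqref{eq:momentRelaxedProblem} and \eqref{eq:performanceAnalysisInd}.

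\textbf{Main obstacle.} The principal bookkeeping task is verifying by direct expansion that $M$ equals the triple product on the left-hand side of \eqref{eq:performanceAnalysisIndC}; this is mechanical but requires care with the block entries. The Schur-complement step for feasibility of \eqref{eq:performanceAnalysisIndC} is slightly more delicate than its analogue in Theorem~\ref{thm:performanceAnalysisCor} because of the coupling between $\dvr{\lambda}$ and $\dvr{Q}$ in the $(2,2)$-block, but becomes routine once $\dvr{Q}+\dvr{\lambda}I$ is made strictly positive definite via $\dvr{Q} = \nu I$.
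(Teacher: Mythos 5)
Your proof is correct and follows essentially the same route as the paper: it forms the Lagrangian dual of \eqref{eq:momentRelaxedProblem}, uses the block-diagonal structure of $\dvrMax{\Sigma} \in \scrS_\mathrm{ind}$ to split the dual constraint into the Lyapunov block \eqref{eq:performanceAnalysisIndB} and the lower-right $2\times 2$ block \eqref{eq:performanceAnalysisIndC}, and then establishes strong duality via the same strict-feasibility construction ($\dvr{\calP}$ from the Lyapunov equation, then large $\dvr{\lambda}$, then large $\dvr{Q}$) as in Theorem~\ref{thm:performanceAnalysisCor}. Your version merely spells out the details the paper leaves as "analogous."
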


\begin{proof}
  By confining $\dvrMax{\Sigma}$ to the set $\scrS_\text{ind}$, we can rewrite the dual optimization problem corresponding to \eqref{eq:performanceAnalysisInd} as
  \begin{align*}
    &\sup_{\dvrMax{\Sigma} \succeq 0,\dvrMax{\Sigma}\in \scrS_\text{ind}} \inf_{\dvr{\lambda} \geq 0, \dvr{Q},\dvr{\calP}} ~\tr\left( \dvr{Q} \Sigma_\text{nom} \right) + \dvr{\lambda}\gamma^2 +\\
    &
    \tr\Bigg( \underbrace{
      \begin{pmatrix}
        \bullet
      \end{pmatrix}^\top
      \begin{pmatrix}
        -\dvr{\calP} & 0 & 0 & 0 & 0\\
        0 & \dvr{\calP} & 0 & 0 & 0\\
        0 & 0 & I & 0 & 0\\
        0 & 0 & 0 & -\dvr{\lambda}I & \dvr{\lambda}I\\
        0 & 0 & 0 & \dvr{\lambda}I & -\dvr{Q} - \dvr{\lambda}I
      \end{pmatrix}
      \begin{pmatrix}
        I & 0 & 0\\
        \dvrC{\calA} & \dvrC{\calB} & 0\\
        \dvrC{\calC} & \dvrC{\calD} & 0\\
        0 & I & 0\\
        0 & 0 & I
      \end{pmatrix}
    }_{=(\star)} \dvrMax{\Sigma} \Bigg) .
  \end{align*}
  Specifically, the left upper $1 \times 1$ block of $(\star)$ corresponds to \eqref{eq:performanceAnalysisIndB} and the right lower $2 \times 2$ block corresponds to \eqref{eq:performanceAnalysisIndC}. The remaining blocks of $(\star)$ are irrelevant, since the top right $1 \times 2$ and bottom left $2 \times 1$ blocks of $\dvrMax{\Sigma}$ are zero. The rest of the proof is analogous to the one of Theorem~\ref{thm:performanceAnalysisCor}.
\end{proof}



\section{Robust synthesis}
\label{sec:robustSynthesis}

In this section, we build on the results of the previous sections to also optimize over the controller parameters $\dvrC{A_c}$, $\dvrC{B_c}$, $\dvrC{C_c}$, and $\dvrC{D_c}$. Unfortunately, the matrix inequality constraints in Theorem~\ref{thm:performanceAnalysisCor} and Theorem~\ref{thm:performanceAnalysisInd} are not simultaneously linear in the controller parameters and the performance certificates $\dvr{\calP}$, $\dvr{Q}$, and $\dvr{\lambda}$.

A first step to convexify \eqref{eq:performanceAnalysisCor} and \eqref{eq:performanceAnalysisInd} in the controller parameters is the application of the Schur complement.

\begin{proposition}[Preliminary synthesis problem 1]
  \label{prop:convexSynthesisCor}
  Let $\dvrC{\calA}$ be stable. Then, the optimization problem \eqref{eq:performanceAnalysisCor} in Theorem~\ref{thm:performanceAnalysisCor} is equivalent to
  \begin{align*}
    &\minimize_{\dvr{\lambda} \geq 0, \dvr{Q},\dvr{\calP} \succ 0} ~~ \tr\left( \dvr{Q} \Sigma_\text{nom} \right) + \dvr{\lambda}\gamma^2\\
    &\mathrm{s.t.} ~~ 
    \begin{pmatrix}
      -\dvr{\calP} & 0 & 0 & \dvrC{\calA^\top}\dvr{\calP} & \dvrC{\calC^\top}\\
      0 & -\dvr{\lambda}I & \dvr{\lambda}I & \dvrC{\calB^\top}\dvr{\calP} & \dvrC{\calD^\top}\\
      0 & \dvr{\lambda}I & -\dvr{\lambda}I-\dvr{Q} & 0 & 0\\
      \dvr{\calP}\dvrC{\calA} & \dvr{\calP}\dvrC{\calB} & 0 & -\dvr{\calP} & 0\\
      \dvrC{\calC} & \dvrC{\calD} & 0 & 0 & -I
    \end{pmatrix}\prec 0.
  \end{align*}
\end{proposition}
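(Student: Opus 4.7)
The plan is to show that the $5\times 5$ block LMI in Proposition~\ref{prop:convexSynthesisCor} is nothing but the Schur-complement expansion of the $3\times 3$ block LMI \eqref{eq:performanceAnalysisCorB} in Theorem~\ref{thm:performanceAnalysisCor}, together with a verification that the sign constraint $\dvr{\calP}\succ 0$ imposed in the proposition is without loss of generality. The objective and the constraint $\dvr{\lambda}\geq 0$ are identical in both formulations, so only the equivalence of the matrix inequality constraints needs to be argued.

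First, I would explicitly expand the congruence in \eqref{eq:performanceAnalysisCorB}. Carrying out the outer product $(\bullet)^\top (\cdot) (\bullet)$ one obtains the equivalent $3\times 3$ block inequality
\begin{align*}
\begin{pmatrix}
\dvrC{\calA^\top}\dvr{\calP}\dvrC{\calA}-\dvr{\calP}+\dvrC{\calC^\top}\dvrC{\calC} & \dvrC{\calA^\top}\dvr{\calP}\dvrC{\calB}+\dvrC{\calC^\top}\dvrC{\calD} & 0\\
\dvrC{\calB^\top}\dvr{\calP}\dvrC{\calA}+\dvrC{\calD^\top}\dvrC{\calC} & \dvrC{\calB^\top}\dvr{\calP}\dvrC{\calB}+\dvrC{\calD^\top}\dvrC{\calD}-\dvr{\lambda}I & \dvr{\lambda}I\\
0 & \dvr{\lambda}I & -\dvr{\lambda}I-\dvr{Q}
\end{pmatrix}\prec 0,
\end{align*}
which already appeared in the proof of Theorem~\ref{thm:performanceAnalysisCor}. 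I would then recognize the terms $\dvrC{\calA^\top}\dvr{\calP}\dvrC{\calA}$, $\dvrC{\calA^\top}\dvr{\calP}\dvrC{\calB}$, $\dvrC{\calB^\top}\dvr{\calP}\dvrC{\calB}$ and $\dvrC{\calC^\top}\dvrC{\calC}$, $\dvrC{\calC^\top}\dvrC{\calD}$, $\dvrC{\calD^\top}\dvrC{\calD}$ as arising from two rank-structured perturbations that can be pulled out via Schur complement. Concretely, assuming for now $\dvr{\calP}\succ 0$, the $3\times 3$ block LMI above holds if and only if the $5\times 5$ block LMI
\begin{align*}
\begin{pmatrix}
-\dvr{\calP} & 0 & 0 & \dvrC{\calA^\top}\dvr{\calP} & \dvrC{\calC^\top}\\
0 & -\dvr{\lambda}I & \dvr{\lambda}I & \dvrC{\calB^\top}\dvr{\calP} & \dvrC{\calD^\top}\\
0 & \dvr{\lambda}I & -\dvr{\lambda}I-\dvr{Q} & 0 & 0\\
\dvr{\calP}\dvrC{\calA} & \dvr{\calP}\dvrC{\calB} & 0 & -\dvr{\calP} & 0\\
\dvrC{\calC} & \dvrC{\calD} & 0 & 0 & -I
\end{pmatrix}\prec 0
\end{align*}
holds, since the Schur complement of its bottom-right block $\mathrm{diag}(-\dvr{\calP},-I)\prec 0$ produces exactly the $3\times 3$ inequality above.

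The remaining issue is that Theorem~\ref{thm:performanceAnalysisCor} only asks for $\dvr{\calP}$ symmetric, whereas the proposition imposes $\dvr{\calP}\succ 0$. I would resolve this by observing that the $(1,1)$ block of the $3\times 3$ inequality gives the strict Lyapunov inequality $\dvrC{\calA^\top}\dvr{\calP}\dvrC{\calA}-\dvr{\calP}\prec -\dvrC{\calC^\top}\dvrC{\calC}\preceq 0$. Since $\dvrC{\calA}$ is stable by assumption, standard discrete-time Lyapunov theory yields $\dvr{\calP}\succ 0$ automatically. Hence adding the constraint $\dvr{\calP}\succ 0$ in the proposition does not alter the feasible set or the optimal value, and the proposition and Theorem~\ref{thm:performanceAnalysisCor} share the same infimum and the same minimizers.

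The main obstacle is purely bookkeeping: one must keep track of the signs when applying the Schur complement (because both Schur-complement blocks $-\dvr{\calP}$ and $-I$ are \emph{negative} definite, each contribution flips sign), and one must justify the implicit positive-definiteness of $\dvr{\calP}$. Both points are standard, so I expect no essential difficulty beyond a careful block-matrix calculation.
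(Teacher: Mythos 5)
Your proposal is correct and follows essentially the same route as the paper: first argue that, by the strict Lyapunov inequality in the $(1,1)$ block together with the stability of $\dvrC{\calA}$, imposing $\dvr{\calP}\succ 0$ is without loss of generality, and then relate the $3\times 3$ and $5\times 5$ inequalities via a Schur complement with respect to $\diag(\dvr{\calP},I)$. The only cosmetic difference is that you take the Schur complement of the bottom-right block $\diag(-\dvr{\calP},-I)$ of the final LMI directly, whereas the paper first produces an intermediate inequality containing $-\dvr{\calP^{-1}}$ and then removes it by the congruence $\diag(I,I,I,\dvr{\calP},I)$; both manipulations are equivalent.
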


\begin{proof}
  First, we notice that the left upper $1 \times 1$ block of \eqref{eq:performanceAnalysisCorB} corresponds to the Lyapunov inequality
  \begin{align*}
    \dvrC{\calA^\top} \dvr{\calP} \dvrC{\calA} - \dvr{\calP} + \dvrC{\calC^\top} \dvrC{\calC} \prec 0.
  \end{align*}
  Consequently, since $\dvrC{\calA}$ is stable, it poses no restriction to require $\dvr{\calP} \succ 0$.

  Next, denote the matrix inequality constraint from Theorem~\ref{thm:performanceAnalysisCor} as
  \begin{align*}
    \resizebox{\linewidth}{!}{$
    \begin{pmatrix}
      -\dvr{\calP} & 0 & 0\\
      0 & -\dvr{\lambda}I & \dvr{\lambda}I\\
      0 & \dvr{\lambda}I & -\dvr{Q} - \dvr{\lambda}I
    \end{pmatrix}
    +
    \begin{pmatrix}
      \dvrC{\calA^\top} & \dvrC{\calC^\top}\\
      \dvrC{\calB^\top} & \dvrC{\calD^\top}\\
      0 & 0
    \end{pmatrix}
    \begin{pmatrix}
      \dvr{\calP} & 0\\
      0 & I
    \end{pmatrix}
    \begin{pmatrix}
      \dvrC{\calA^\top} & \dvrC{\calC^\top}\\
      \dvrC{\calB^\top} & \dvrC{\calD^\top}\\
      0 & 0
    \end{pmatrix}^\top \prec 0.$}
  \end{align*}

  In this form, we see that, since $\diag(\dvr{\calP},I)$ is positive definite, the Schur complement can be applied. This yields the equivalent matrix inequality
  \begin{align*}
    \begin{pmatrix}
      -\dvr{\calP} & 0 & 0 & \dvrC{\calA^\top} & \dvrC{\calC^\top}\\
      0 & -\dvr{\lambda}I & \dvr{\lambda}I & \dvrC{\calB^\top} & \dvrC{\calD^\top}\\
      0 & \dvr{\lambda}I & -\dvr{Q} - \dvr{\lambda}I & 0 & 0\\
      \dvrC{\calA} & \dvrC{\calB} & 0 & -\dvr{\calP^{-1}} & 0\\
      \dvrC{\calC} & \dvrC{\calD} & 0 & 0 & -I
    \end{pmatrix} \prec 0.
  \end{align*}
  This matrix inequality is nonlinear in the certificate $\dvr{\calP}$. Thus, we multiply this matrix inequality from both sides with $\diag(I,I,I,\dvr{\calP},I)$ to achieve our desired result.
\end{proof}

\begin{proposition}[Preliminary synthesis problem 2]
  \label{prop:convexSynthesisInd}
  Let $\dvrC{\calA}$ be stable. Then, the optimization problem from Theorem~\ref{thm:performanceAnalysisInd} is equivalent to
  \begin{subequations}
  \begin{align}
    &\minimize_{\dvr{\lambda} \geq 0, \dvr{\calP} \succ 0, \dvr{Q}} ~~ \tr\left( \dvr{Q} \Sigma_\text{nom} \right) + \dvr{\lambda}\gamma^2
    \label{}\\
    &\mathrm{s.t.} ~~ 
    \begin{pmatrix}
      -\dvr{\calP} & \dvrC{\calA^\top}\dvr{\calP} & \dvrC{\calC^\top}\\
      \dvr{\calP}\dvrC{\calA} & -\dvr{\calP} & 0\\
      \dvrC{\calC} & 0 & -I
    \end{pmatrix} \prec 0,\\
    &
    \begin{pmatrix}
      -\dvr{\lambda}I & \dvr{\lambda}I & \dvrC{\calB^\top}\dvr{\calP} & \dvrC{\calD^\top}\\
      \dvr{\lambda}I & -\dvr{Q} - \dvr{\lambda}I & 0 & 0\\
      \dvr{\calP}\dvrC{\calB} & 0 & -\dvr{\calP} & 0\\
      \dvrC{\calD} & 0 & 0 & -I
    \end{pmatrix}\prec 0.
  \end{align}
\end{subequations}
\end{proposition}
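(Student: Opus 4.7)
The plan is to mirror the argument of Proposition~\ref{prop:convexSynthesisCor}. In the independent-disturbance case, the two inequalities \eqref{eq:performanceAnalysisIndB} and \eqref{eq:performanceAnalysisIndC} are already decoupled—the former involves only $\dvrC{\calA}$, $\dvrC{\calC}$ and $\dvr{\calP}$, while the latter involves only $\dvrC{\calB}$, $\dvrC{\calD}$, $\dvr{\calP}$, $\dvr{\lambda}$ and $\dvr{Q}$—so I can Schur-complement each of them separately.

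First, I would argue that imposing $\dvr{\calP} \succ 0$ is without loss of generality. The constraint \eqref{eq:performanceAnalysisIndB} is a strict discrete-time Lyapunov inequality, and since $\dvrC{\calA}$ is stable, any feasible $\dvr{\calP}$ can be replaced by the positive-definite solution of $\dvrC{\calA^\top}\dvr{\calP}\dvrC{\calA} - \dvr{\calP} = -\dvrC{\calC^\top}\dvrC{\calC} - \epsilon I$ for small $\epsilon > 0$; this preserves feasibility and the cost, since $\dvr{\calP}$ does not enter the objective \eqref{eq:performanceAnalysisIndA}.

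Next, rewriting \eqref{eq:performanceAnalysisIndB} in the factored form
\[
-\dvr{\calP} + \begin{pmatrix} \dvrC{\calA^\top} & \dvrC{\calC^\top}\end{pmatrix}\begin{pmatrix}\dvr{\calP} & 0\\ 0 & I\end{pmatrix}\begin{pmatrix}\dvrC{\calA}\\ \dvrC{\calC}\end{pmatrix} \prec 0
\]
and using that $\diag(\dvr{\calP},I) \succ 0$, I apply the Schur complement to obtain a $3\times 3$ block LMI carrying $-\dvr{\calP^{-1}}$ on its diagonal, and then congruence-multiply by $\diag(I,\dvr{\calP},I)$ to remove the inverse, yielding the first LMI of the claim. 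The same three-step procedure applied to \eqref{eq:performanceAnalysisIndC}—after isolating its quadratic-in-$\dvrC{\calB},\dvrC{\calD}$ part against $\diag(\dvr{\calP},I)$—followed by congruence with $\diag(I,I,\dvr{\calP},I)$, produces the second LMI.

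No substantial obstacle is anticipated; the step is essentially bookkeeping. The cleaner block structure compared with Proposition~\ref{prop:convexSynthesisCor} reflects exactly the absence of cross-coupling between the Lyapunov and dissipation blocks that the $\scrS_\mathrm{ind}$ partitioning in \eqref{eq:SigmaPartitioning} already removes, which is why the two Schur-complement operations can be carried out independently rather than in a single $5 \times 5$ inequality.
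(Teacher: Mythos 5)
Your proposal is correct and is essentially the paper's own argument: the paper proves this proposition simply by repeating the Schur-complement plus congruence steps of Proposition~\ref{prop:convexSynthesisCor} on the two decoupled inequalities \eqref{eq:performanceAnalysisIndB} and \eqref{eq:performanceAnalysisIndC}, exactly as you describe. One small simplification: the detour of replacing $\dvr{\calP}$ by a Lyapunov-equation solution is unnecessary, since the strict Lyapunov inequality \eqref{eq:performanceAnalysisIndB} together with stability of $\dvrC{\calA}$ already forces any feasible $\dvr{\calP}$ to be positive definite, so imposing $\dvr{\calP}\succ 0$ is immediately without loss of generality.
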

\begin{proof}
  The proof is analogous to the proof of Proposition~\ref{prop:convexSynthesisCor}.
\end{proof}

Unfortunately, the usage of the Schur complement alone is not enough to convexify the synthesis problem. Proposition~\ref{prop:convexSynthesisCor} and Proposition~\ref{prop:convexSynthesisInd} give proof for that, as terms of the form $\dvr{\calP} \dvrC{\calA}$, $\dvr{\calP} \dvrC{\calB}$ appear. These terms become bilinear, when we add the controller parameters to the decision variables. Fortunately, there is yet another celebrated result from control theory that can be invoked to convexify the synthesis problem.

\begin{theorem}[Version of \cite{599969}, cmp. \cite{masubuchi1998lmi}]
  \label{thm:schererParametrization}
  Consider the definitions of the matrices $\dvr{\bP}$, $\dvrC{\bA}$, $\dvrC{\bB}$, $\dvrC{\bC}$, and $\dvrC{\bD}$ given by
  \begin{align*}
    &\dvr{\bP}
    =
    \begin{pmatrix}
      \dvr{Y} & I\\
      I & \dvr{X}
    \end{pmatrix}, \quad
    \left(
    \begin{array}{c|cc}
      \dvrC{\bA}  & \dvrC{\bB}\\
      \hline
      \dvrC{\bC}  & \dvrC{\bD} 
    \end{array}
    \right)
    =\\
    &=
    \left(
    \begin{array}{cc|cc}
      A\dvr{Y} + B_u\dvrC{M} & A + B_u\dvrC{N}C_y & B_w + B_u\dvrC{N}D_{yw}\\
      \dvrC{K} & \dvr{X}A + \dvrC{L}C_y & \dvr{X}B_w + \dvrC{L}D_{yw}\\
      \hline
      C_z\dvr{Y} + D_{zu}\dvrC{M} & C_z + D_{zu}\dvrC{N}C_y & D_{zw} + D_{zu}\dvrC{N}D_{yw}
    \end{array}
    \right)
  \end{align*}
  in terms of the parameters $\dvr{X} = \dvr{X^\top} \in \bbR^{n_x \times n_x}$, $\dvr{Y} = \dvr{Y^\top} \in \bbR^{n_x \times n_x}$, $\dvrC{K} \in \bbR^{n_x \times n_x}$, $\dvrC{L} \in \bbR^{n_x \times n_y}$, $\dvrC{M} \in \bbR^{n_u \times n_x}$, $\dvrC{N} \in \bbR^{n_u \times n_y}$. These matrices shall be related to the controller parameters $\dvrC{A_c}$, $\dvrC{B_c}$, $\dvrC{C_c}$, and $\dvrC{D_c}$ and the multiplier matrix $\dvr{\calP}$ through the equations
  \begin{align}
    \begin{pmatrix}
      \calT^\top \dvr{\calP} \dvrC{\calA}\calT & \calT^\top \dvr{\calP}\dvrC{\calB}\\
      \dvrC{\calC}\calT & \dvrC{\calD}
    \end{pmatrix}
    &=
    \begin{pmatrix}
      \dvrC{\bA}  & \dvrC{\bB}\\
      \dvrC{\bC}  & \dvrC{\bD} 
    \end{pmatrix},
    & \calT^\top \dvr{\calP} \calT &= \dvr{\bP} . \label{eq:schererParametrization}
  \end{align}
  The following facts are true:
  \begin{itemize}
    \item For any $\dvr{\calP} \succ 0$, $\dvrC{A_c}$, $\dvrC{B_c}$, $\dvrC{C_c}$, and $\dvrC{D_c}$, there exist a congruence transformation via nonsingular matrix $\calT$ and parameters $(\dvr{X},\dvr{Y},\dvrC{K},\dvrC{L},\dvrC{M},\dvrC{N})$ such that \eqref{eq:schererParametrization} holds.
    \item For any parameters $(\dvr{X},\dvr{Y},\dvrC{K},\dvrC{L},\dvrC{M},\dvrC{N})$ with $\dvr{\bP} \succ 0$, there exist a congruence transformation via nonsingular matrix $\calT$, a matrix $\dvr{\calP}$, and controller parameters $\dvrC{A_c}$, $\dvrC{B_c}$, $\dvrC{C_c}$, and $\dvrC{D_c}$ such that \eqref{eq:schererParametrization} holds.
  \end{itemize}
\end{theorem}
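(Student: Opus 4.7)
My plan is to execute the classical linearizing change of variables for controller synthesis LMIs. Partition $\dvr{\calP}$ and its inverse conformally with the combined state $\chi$:
\begin{align*}
\dvr{\calP} = \begin{pmatrix} \dvr{X} & \dvr{U} \\ \dvr{U^\top} & \ast \end{pmatrix}, \qquad \dvr{\calP^{-1}} = \begin{pmatrix} \dvr{Y} & \dvr{V} \\ \dvr{V^\top} & \ast \end{pmatrix},
\end{align*}
so that $\dvr{X}\dvr{Y} + \dvr{U}\dvr{V^\top} = I$ follows from $\dvr{\calP}\dvr{\calP^{-1}} = I$. With
\begin{align*}
\calT := \begin{pmatrix} \dvr{Y} & I \\ \dvr{V^\top} & 0 \end{pmatrix},
\end{align*}
a direct block computation yields $\calT^\top \dvr{\calP} \calT = \dvr{\bP}$, which is the anchor of the whole argument.

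For the first direction, assume $\dvr{\calP} \succ 0$ and that the controller parameters are given. Generically $\dvr{U}$ is nonsingular; otherwise perturb $\dvr{\calP}$ infinitesimally while preserving $\dvr{\calP} \succ 0$. Define the new variables
\begin{align*}
\dvrC{N} &:= \dvrC{D_c}, \quad \dvrC{M} := \dvrC{C_c}\dvr{V^\top} + \dvrC{D_c}C_y\dvr{Y}, \quad \dvrC{L} := \dvr{U}\dvrC{B_c} + \dvr{X}B_u\dvrC{D_c},\\
\dvrC{K} &:= \dvr{U}\dvrC{A_c}\dvr{V^\top} + \dvr{U}\dvrC{B_c}C_y\dvr{Y} + \dvr{X}B_u\dvrC{C_c}\dvr{V^\top} + \dvr{X}(A + B_u\dvrC{D_c}C_y)\dvr{Y}.
\end{align*}
Expanding $\calT^\top \dvr{\calP}\dvrC{\calA}\calT$, $\calT^\top \dvr{\calP}\dvrC{\calB}$, and $\dvrC{\calC}\calT$ block by block using the explicit forms of the closed-loop matrices recovers $\dvrC{\bA},\dvrC{\bB},\dvrC{\bC}$ exactly, while $\dvrC{\calD} = \dvrC{\bD}$ is immediate.

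For the second direction, $\dvr{\bP} \succ 0$ implies via Schur complement that $\dvr{Y} - \dvr{X^{-1}} \succ 0$, and hence $I - \dvr{X}\dvr{Y}$ is nonsingular. Factor $I - \dvr{X}\dvr{Y} = \dvr{U}\dvr{V^\top}$ with square nonsingular $\dvr{U},\dvr{V}$ (e.g.\ via SVD), and reconstruct $\dvr{\calP}$ by choosing its $(2,2)$ block as the unique value making $\calT^\top \dvr{\calP} \calT = \dvr{\bP}$; since $\calT$ is nonsingular and $\dvr{\bP} \succ 0$, congruence forces $\dvr{\calP} \succ 0$. Invert the change of variables to define $\dvrC{D_c} := \dvrC{N}$, $\dvrC{C_c} := (\dvrC{M} - \dvrC{N}C_y\dvr{Y})\dvr{V^{-\top}}$, $\dvrC{B_c} := \dvr{U^{-1}}(\dvrC{L} - \dvr{X}B_u\dvrC{N})$, and $\dvrC{A_c}$ analogously from the expression for $\dvrC{K}$. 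All identities in \eqref{eq:schererParametrization} then hold by construction.

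The main obstacle is bookkeeping rather than insight: the congruence-based block multiplications must be carried out cleanly, and in both directions one must extract nonsingular square factors $\dvr{U},\dvr{V}$. In the forward direction this may require infinitesimally perturbing $\dvr{\calP}$, while in the reverse direction it relies on the Schur complement step to make $I - \dvr{X}\dvr{Y}$ invertible. I expect this nonsingularity accounting, rather than the algebraic verification, to be the delicate part.
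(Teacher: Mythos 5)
The paper never proves this theorem; it is imported verbatim from the cited references \cite{599969,masubuchi1998lmi} and used as a black box. Your proposal reconstructs exactly the standard linearizing change of variables from that literature, and the algebra is correct: with $\calT=\begin{pmatrix}\dvr{Y} & I\\ \dvr{V^\top} & 0\end{pmatrix}$ one has $\calP\calT=\begin{pmatrix}I & \dvr{X}\\ 0 & \dvr{U^\top}\end{pmatrix}$, hence $\calT^\top\dvr{\calP}\calT=\dvr{\bP}$, your definitions of $(\dvrC{K},\dvrC{L},\dvrC{M},\dvrC{N})$ reproduce $\dvrC{\bA},\dvrC{\bB},\dvrC{\bC},\dvrC{\bD}$ block by block, and in the converse direction $\dvr{\bP}\succ 0$ gives $\dvr{Y}-\dvr{X}^{-1}\succ0$, so $I-\dvr{X}\dvr{Y}$ is nonsingular, square nonsingular factors $\dvr{U}\dvr{V^\top}=I-\dvr{X}\dvr{Y}$ exist, $\dvr{\calP}:=\calT^{-\top}\dvr{\bP}\calT^{-1}\succ0$ has the required $(1,1)$ and $(1,2)$ blocks, and the inverted controller formulas close the loop. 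One small point you could make explicit in the forward direction: since all blocks are square here (controller order equals $n_x$), nonsingularity of $\dvr{U}$ already implies $\dvr{Y}-\dvr{X}^{-1}\succ0$ and hence nonsingularity of $\dvr{V^\top}=\dvr{U}^{-1}(I-\dvr{X}\dvr{Y})$, so a single perturbation suffices and $\calT$ is automatically invertible.

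The only genuine soft spot is the perturbation step in the first bullet, and it is worth being honest about it: perturbing $\dvr{\calP}$ proves the claim for the perturbed matrix, not for the $\dvr{\calP}$ you were handed, and the literal statement can in fact fail for degenerate data. For example, take $B_u=0$, $C_y=0$, $D_{zu}=0$, $D_{yw}=0$, $C_z$ of full column rank, $B_w$ of full row rank, and $\dvr{\calP}=I$ (so $\dvr{U}=0$): the equations $\dvrC{\calC}\calT=\dvrC{\bC}$ and $\calT^\top\dvr{\calP}\dvrC{\calB}=\dvrC{\bB}$ then force $\dvr{X}=\dvr{Y}=I$, making $\dvr{\bP}$ singular, which contradicts $\calT^\top\dvr{\calP}\calT=\dvr{\bP}$ with $\calT$ nonsingular. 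This is precisely the known caveat of the cited result, and it is immaterial for how the theorem is used in Theorems~\ref{thm:convexSynthesisCor} and \ref{thm:convexSynthesisInd}: there $\dvr{\calP}$ only appears inside strict LMIs, so an arbitrarily small perturbation preserves feasibility and the objective value, and the equality of optimal values survives. If you intend your argument as a self-contained proof, state the first bullet with this density/perturbation qualifier (or as a statement about feasible points of strict LMIs) rather than for every $\dvr{\calP}\succ0$; with that adjustment your proof is the standard and correct one.
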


With the aid of Theorem~\ref{thm:schererParametrization}, we can easily extend the results of Proposition~\ref{prop:convexSynthesisCor} and Proposition~\ref{prop:convexSynthesisInd} to convex controller synthesis.

\begin{theorem}[Robust synthesis 1]
  \label{thm:convexSynthesisCor}
  The optimal value of the optimization problem \eqref{eq:mainProblem} for $\scrW = \scrW_\text{cor}$ is equal to the optimal value of the convex program
  \begin{align*}
    &\minimize_{\dvr{\lambda} \geq 0, \dvr{Q},\dvr{X},\dvr{Y},\dvrC{K},\dvrC{L},\dvrC{M},\dvrC{N}} ~~ \tr\left( \dvr{Q} \Sigma_\text{nom} \right) + \dvr{\lambda}\gamma^2\\
    &\mathrm{s.t.} ~~ 
    \begin{pmatrix}
      -\dvr{\bP} & 0 & 0 & \dvrC{\bA^\top} & \dvrC{\bC^\top}\\
      0 & -\dvr{\lambda}I & \dvr{\lambda}I & \dvrC{\bB^\top} & \dvrC{\bD^\top}\\
      0 & \dvr{\lambda}I & -\dvr{Q} - \dvr{\lambda}I & 0 & 0\\
      \dvrC{\bA} & \dvrC{\bB} & 0 & -\dvr{\bP} & 0\\
      \dvrC{\bC} & \dvrC{\bD} & 0 & 0 & -I
    \end{pmatrix}\prec 0,
    ~~ \dvr{\bP} \succ 0.
  \end{align*}
\end{theorem}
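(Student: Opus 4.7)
The plan is to chain together the previously established equivalences and then apply the Scherer variable change. Concretely, by Theorem~\ref{thm:exactnessMomentRelaxation} the original problem \eqref{eq:mainProblem} with $\scrW=\scrW_\text{cor}$ has, for every stabilizing controller, the same inner objective value as the moment relaxation \eqref{eq:momentRelaxedProblem} with $\scrS=\scrS_\text{cor}$, and Theorem~\ref{thm:performanceAnalysisCor} together with Proposition~\ref{prop:convexSynthesisCor} rewrite that relaxation's optimal value as the infimum in
\[
\min_{\dvr{\lambda}\geq 0,\dvr{Q},\dvr{\calP}\succ 0} \tr(\dvr{Q}\Sigma_\text{nom}) + \dvr{\lambda}\gamma^2
\]
subject to the Schur-complemented LMI of Proposition~\ref{prop:convexSynthesisCor}. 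So the outer minimization over the controller parameters in \eqref{eq:mainProblem} equals the joint minimization over $(\dvr{\lambda},\dvr{Q},\dvr{\calP}\succ 0,\dvrC{A_c},\dvrC{B_c},\dvrC{C_c},\dvrC{D_c})$ of the same LMI. The task is to reparametrize this joint set via Theorem~\ref{thm:schererParametrization}.

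I would perform a congruence transformation of the $5\times 5$ block LMI in Proposition~\ref{prop:convexSynthesisCor} with the block diagonal matrix $\diag(\calT,I,I,\calT,I)$, where $\calT$ is the nonsingular transformation from Theorem~\ref{thm:schererParametrization}. This leaves the inequality equivalent (since $\calT$ is invertible) and, using the identities \eqref{eq:schererParametrization}, transforms the four occurrences of $\dvr{\calP}\dvrC{\calA}$, $\dvrC{\calA^\top}\dvr{\calP}$, $\dvr{\calP}\dvrC{\calB}$, $\dvrC{\calB^\top}\dvr{\calP}$, $\dvrC{\calC}$, $\dvrC{\calD}$, and the two copies of $\dvr{\calP}$ on the block diagonal, into $\dvrC{\bA}$, $\dvrC{\bA^\top}$, $\dvrC{\bB}$, $\dvrC{\bB^\top}$, $\dvrC{\bC}$, $\dvrC{\bD}$, and $\dvr{\bP}$ respectively. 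The resulting LMI is precisely the one in the theorem statement, and it is now affine in the new variables $(\dvr{X},\dvr{Y},\dvrC{K},\dvrC{L},\dvrC{M},\dvrC{N},\dvr{\lambda},\dvr{Q})$.

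To turn this into an equality of optimal values, I would use both bullets of Theorem~\ref{thm:schererParametrization}: given any stabilizing controller and any $\dvr{\calP}\succ 0$ feasible for Proposition~\ref{prop:convexSynthesisCor}, the first bullet produces $(\dvr{X},\dvr{Y},\dvrC{K},\dvrC{L},\dvrC{M},\dvrC{N})$ with $\dvr{\bP}\succ 0$ satisfying the transformed LMI, and, conversely, any $(\dvr{X},\dvr{Y},\dvrC{K},\dvrC{L},\dvrC{M},\dvrC{N})$ with $\dvr{\bP}\succ 0$ satisfying the transformed LMI is, by the second bullet, the image of some $(\dvr{\calP}\succ 0,\dvrC{A_c},\dvrC{B_c},\dvrC{C_c},\dvrC{D_c})$ feasible for Proposition~\ref{prop:convexSynthesisCor}. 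Since the objective $\tr(\dvr{Q}\Sigma_\text{nom})+\dvr{\lambda}\gamma^2$ is untouched by the change of variables, both infima coincide.

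The one point that needs a line of care, and which I expect to be the main obstacle, is the stability side condition "$\dvrC{\calA}$ is stable" appearing in \eqref{eq:mainProblem} and in the hypotheses of Theorem~\ref{thm:performanceAnalysisCor} and Proposition~\ref{prop:convexSynthesisCor}. In the reparametrized LMI there is no explicit stability constraint; instead, the upper-left block of the matrix inequality, read together with $\dvr{\bP}\succ 0$, yields the Lyapunov inequality $\dvrC{\calA^\top}\dvr{\calP}\dvrC{\calA}-\dvr{\calP}\prec 0$ after undoing the congruence, which by the standard Lyapunov stability theorem forces $\dvrC{\calA}$ to be Schur-stable automatically. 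Hence restricting the outer minimization in \eqref{eq:mainProblem} to stabilizing controllers is without loss once the LMI and $\dvr{\bP}\succ 0$ are imposed, so the two optimal values are indeed equal.
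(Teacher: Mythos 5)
Your proposal is correct and follows essentially the same route as the paper's proof: it chains the prior equivalences (Theorems~\ref{thm:finiteHorizonProblemEquivalent}, \ref{thm:exactnessMomentRelaxation}, \ref{thm:performanceAnalysisCor} and Proposition~\ref{prop:convexSynthesisCor}), applies the congruence transformation $\diag(\calT,I,I,\calT,I)$ together with both directions of Theorem~\ref{thm:schererParametrization}, and notes that $\dvr{\bP}\succ 0$ (equivalently $\dvr{\calP}\succ 0$) renders the stability requirement on $\dvrC{\calA}$ automatic. Your explicit Lyapunov-inequality justification of that last point is a slightly more detailed version of the paper's closing remark, but the argument is the same.
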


\begin{proof}
  By Theorems~\ref{thm:finiteHorizonProblemEquivalent}, \ref{thm:exactnessMomentRelaxation}, and \ref{thm:performanceAnalysisCor}, the optimal value of \eqref{eq:mainProblem} for $\scrW = \scrW_\text{cor}$ equals the optimal value of the optimization problem in Proposition~\ref{prop:convexSynthesisCor} when we additionally minimize over the controller parameters $\dvrC{A_c}$, $\dvrC{B_c}$, $\dvrC{C_c}$, and $\dvrC{D_c}$.

  To establish the equivalence with our convex formulation, consider any feasible point $(\dvrC{A_c}, \dvrC{B_c}, \dvrC{C_c}, \dvrC{D_c}, \dvr{\lambda}, \dvr{Q}, \dvr{\calP})$ of the extended optimization problem from Proposition~\ref{prop:convexSynthesisCor}. By the first part of Theorem~\ref{thm:schererParametrization}, there exists a nonsingular congruence transformation matrix $\calT$ and parameters $(\dvr{X}, \dvr{Y}, \dvrC{K}, \dvrC{L}, \dvrC{M}, \dvrC{N})$ such that the relations \eqref{eq:schererParametrization} hold with $\dvr{\bP} = \calT^\top \dvr{\calP} \calT \succ 0$.
  
  Applying the congruence transformation $\diag(\calT, I, I, \calT, I)$ to the matrix inequality constraint in Proposition~\ref{prop:convexSynthesisCor} and substituting the parameterized variables $(\dvrC{\bA}, \dvrC{\bB}, \dvrC{\bC}, \dvrC{\bD}, \dvr{\bP})$ defined in Theorem~\ref{thm:schererParametrization}, we obtain a feasible point for our convex program with the same objective value.
  
  Conversely, by the second part of Theorem~\ref{thm:schererParametrization}, any feasible point of our convex program can be transformed back to a feasible point  $(\dvrC{A_c}, \dvrC{B_c}, \dvrC{C_c}, \dvrC{D_c}, \dvr{\lambda}, \dvr{Q}, \dvr{\calP})$ of the extended optimization problem from Proposition~\ref{prop:convexSynthesisCor} with the same objective value. This establishes the equivalence of optimal values. Moreover, the condition $\dvr{\calP}\succ 0$, which is guaranteed by $\dvr{\bP} \succ 0$, ensures that $\dvrC{\calA}$ is stable.
\end{proof}

The same convexification can be carried out with Proposition~\ref{prop:convexSynthesisInd}.

\begin{theorem}[Robust synthesis 2]
  \label{thm:convexSynthesisInd}
  The optimal value of the optimization problem \eqref{eq:mainProblem} for $\scrW = \scrW_\text{ind}$ is equal to the optimal value of the convex program
  \begin{align*}
    &\minimize_{\dvr{\lambda} \geq 0, \dvr{Q},\dvr{X},\dvr{Y},\dvrC{K},\dvrC{L},\dvrC{M},\dvrC{N}} ~~ \tr\left( \dvr{Q} \Sigma_\text{nom} \right) + \dvr{\lambda}\gamma^2\\
    &\begin{aligned}\mathrm{s.t.} ~~ 
    &\begin{pmatrix}
      -\dvr{\bP} & \dvrC{\bA^\top} & \dvrC{\bC^\top}\\
      \dvrC{\bA} & -\dvr{\bP} & 0\\
      \dvrC{\bC} & 0 & -I
    \end{pmatrix} \prec 0, ~~ \dvr{\bP} \succ 0,\\
    &
    \begin{pmatrix}
      -\dvr{\lambda}I & \dvr{\lambda}I & \dvrC{\bB^\top} & \dvrC{\bD^\top}\\
      \dvr{\lambda}I & -\dvr{Q} - \dvr{\lambda}I & 0 & 0\\
      \dvrC{\bB} & 0 & -\dvr{\bP} & 0\\
      \dvrC{\bD} & 0 & 0 & -I
    \end{pmatrix}\prec 0.
    \end{aligned}
  \end{align*}
\end{theorem}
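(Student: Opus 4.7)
The plan is to mirror the proof of Theorem~\ref{thm:convexSynthesisCor}, now starting from Proposition~\ref{prop:convexSynthesisInd} rather than Proposition~\ref{prop:convexSynthesisCor}. The first step is to concatenate the equivalences already established for the independent case: Theorem~\ref{thm:finiteHorizonProblemEquivalent} reformulates \eqref{eq:mainProblem} with $\scrW=\scrW_\mathrm{ind}$ via the auxiliary process $\hat{w}$; Theorem~\ref{thm:exactnessMomentRelaxation} replaces the worst-case criterion by the moment relaxation \eqref{eq:momentRelaxedProblem} over $\scrS_\mathrm{ind}$; Theorem~\ref{thm:performanceAnalysisInd} dualizes; and Proposition~\ref{prop:convexSynthesisInd} applies the Schur complement. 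Adjoining the minimization over $(\dvrC{A_c},\dvrC{B_c},\dvrC{C_c},\dvrC{D_c})$ reduces \eqref{eq:mainProblem} to an optimization in $(\dvr{\lambda},\dvr{Q},\dvr{\calP},\dvrC{A_c},\dvrC{B_c},\dvrC{C_c},\dvrC{D_c})$ under the two LMIs of Proposition~\ref{prop:convexSynthesisInd}, with stability of $\dvrC{\calA}$ automatically enforced by $\dvr{\calP}\succ 0$ together with the first LMI (cf.~the argument in the proof of Proposition~\ref{prop:convexSynthesisCor}).

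Second, I would invoke Theorem~\ref{thm:schererParametrization} to linearize the remaining bilinearities in the controller parameters. For the forward direction, any feasible tuple $(\dvrC{A_c},\dvrC{B_c},\dvrC{C_c},\dvrC{D_c},\dvr{\lambda},\dvr{Q},\dvr{\calP})$ furnishes, by the first item of Theorem~\ref{thm:schererParametrization}, a nonsingular $\calT$ and parameters $(\dvr{X},\dvr{Y},\dvrC{K},\dvrC{L},\dvrC{M},\dvrC{N})$ satisfying \eqref{eq:schererParametrization}. I would then apply the congruence $\diag(\calT,\calT,I)$ to the first LMI of Proposition~\ref{prop:convexSynthesisInd}; using $\calT^\top\dvr{\calP}\calT=\dvr{\bP}$, $\calT^\top\dvr{\calP}\dvrC{\calA}\calT=\dvrC{\bA}$, and $\dvrC{\calC}\calT=\dvrC{\bC}$, this yields the first LMI of the theorem. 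The analogous congruence $\diag(I,I,\calT,I)$ applied to the second LMI, combined with $\calT^\top\dvr{\calP}\dvrC{\calB}=\dvrC{\bB}$ and $\dvrC{\calD}=\dvrC{\bD}$, yields the second LMI. The cost function is untouched because $\dvr{\lambda}$ and $\dvr{Q}$ do not participate in the congruence.

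For the converse direction, the second item of Theorem~\ref{thm:schererParametrization} constructs, from any feasible $(\dvr{X},\dvr{Y},\dvrC{K},\dvrC{L},\dvrC{M},\dvrC{N})$ with $\dvr{\bP}\succ 0$, a congruence $\calT$, a multiplier $\dvr{\calP}\succ 0$, and controller matrices fulfilling \eqref{eq:schererParametrization}. Inverting the two congruence transformations recovers a feasible point of the synthesis problem from Proposition~\ref{prop:convexSynthesisInd} with identical objective value. Combined with the chain of equivalences above, this closes the bi-directional correspondence and matches optimal values.

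The main obstacle, while conceptually modest, is the bookkeeping: the two LMIs of Proposition~\ref{prop:convexSynthesisInd} must be linearized by one and the same $\calT$ and one and the same parameter tuple $(\dvr{X},\dvr{Y},\dvrC{K},\dvrC{L},\dvrC{M},\dvrC{N})$. This is consistent because both LMIs depend on $\dvr{\calP}$ and the closed-loop matrices only through the quantities $\calT^\top\dvr{\calP}\calT$, $\calT^\top\dvr{\calP}\dvrC{\calA}\calT$, $\calT^\top\dvr{\calP}\dvrC{\calB}$, $\dvrC{\calC}\calT$, and $\dvrC{\calD}$, which the parameterization of Theorem~\ref{thm:schererParametrization} supplies in a single coherent package via \eqref{eq:schererParametrization}. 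Beyond this consistency check, no ingredient is required that was not already used in the proof of Theorem~\ref{thm:convexSynthesisCor}.
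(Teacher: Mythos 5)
Your proposal is correct and follows essentially the same route as the paper: the paper's proof of this theorem simply carries out the steps of Theorem~\ref{thm:convexSynthesisCor} starting from Proposition~\ref{prop:convexSynthesisInd}, which is exactly what you do, including the correct congruence transformations $\diag(\calT,\calT,I)$ and $\diag(I,I,\calT,I)$ for the two LMIs and the single shared parameter tuple from Theorem~\ref{thm:schererParametrization}.
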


\begin{proof}
This result follows from carrying out the same steps as in the proof of Theorem~\ref{thm:convexSynthesisCor}.
\end{proof}

\section{Relation to robust $H_2$ synthesis}

The problem studied in this article is closely related to a specific robust $H_2$ synthesis problem. Indeed, we show in this section that the robust analysis problem \eqref{eq:performanceAnalysisInd} can be interpreted as robust $H_2$ analysis for an uncertain system interconnection depicted in Figure \ref{fig:H2Circuit}.

\begin{figure}
  \centering
  \tikzstyle{block} = [draw,rectangle,thick,minimum height=2em,minimum width=2em,
shade, shading=axis, top color=lightgray, bottom color=white,
rounded corners=.15cm]
\tikzstyle{sum} = [draw,circle,inner sep=0mm,minimum size=0mm]
\tikzstyle{connector} = [-,thick]
\tikzstyle{arrowconnector} = [-stealth,thick]

\begin{tikzpicture}[scale=1, auto]
		\node (e1) {};
		\node[block,right = 2cm of e1] (f1) {$Z_-$};
		\node[right = 2cm of f1] (e2) {};

		\node[block, below = 0.5cm  of f1] (f2) {$\begin{pmatrix}
				x^+\\
				\hline
				z\\
			\end{pmatrix}
			=
			\left(
				\begin{array}{c|ccc}
					\dvrC{\calA} & \dvrC{\calB}\\
					\hline
					\dvrC{\calC} & \dvrC{\calD}
				\end{array}
			\right)
			\begin{pmatrix}
				x\\ 
				\hline 
				w
			\end{pmatrix}$};
		\node [left = 0.5cm of f2] (e3) {}; 
		\node [right = 0.5cm of f2] (e4) {};
		
		\node[block,below left = -0.8cm and 1.0cm of f2] (f4) {$\Delta$};
		\node [left = 1cm of f4] (e7) {}; 
	
	\draw [connector] (e1.center) -- node{$x$} (f1);
	\draw [arrowconnector] (e2.center) -- node{$x^+$} (f1);
	\draw [arrowconnector] (e1.center) |- ($(f2.west) + (0,0.2)$);
	\draw [connector] (e2.center) |- node{$~$} ($(f2.east) + (0,0.2)$);
	\draw [arrowconnector] ($(f4.east) - (0.0,0)$) -- node{$w$} ($(f2.west) - (0,0.2)$);
	\draw [arrowconnector] ($(f2.east) - (0,0.2)$) -- node{$z$} ($(e4.east) - (-0.5,0.2)$);
	\draw[arrowconnector] ($(e7.center) - (0,0.0)$) -- node{$\hat{w}$} ($(f4.west) - (0,0.0)$);
\end{tikzpicture}
  \caption{This figure shows a closed-loop system described by matrices $\dvrC{\calA},\dvrC{\calB},\dvrC{\calC},\dvrC{\calD}$. The system has an uncertain real block $\Delta$ in the performance channel from $\hat{w}$ to $z$. The symbol $Z_-$ denotes the back shift operator.}
  \label{fig:H2Circuit}
\end{figure}
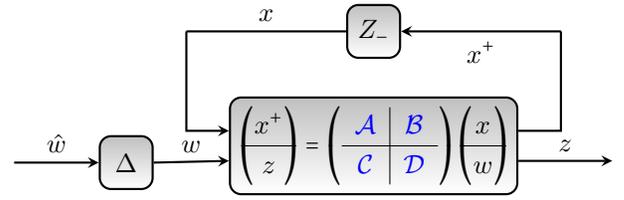

This uncertain system interconnection is given by a linear (closed-loop) system with a real block $\Delta$ in the input of the performance channel, i.e., we study the uncertain system
\begin{align}
  \begin{pmatrix}
    \chi_{k+1}\\
    z_{k+1}
  \end{pmatrix}
  &=
  \begin{pmatrix}
    \dvrC{\calA} & \dvrC{\calB}\Delta\\
    \dvrC{\calC} & \dvrC{\calD}\Delta
  \end{pmatrix}
  \begin{pmatrix}
    \chi_k\\
    \hat{w}_k
  \end{pmatrix}
  =
  \begin{pmatrix}
    \dvrC{\calA} & \dvrC{\calB_\Delta}\\
    \dvrC{\calC} & \dvrC{\calD_\Delta}
  \end{pmatrix}
  \begin{pmatrix}
    \chi_k\\
    \hat{w}_k
  \end{pmatrix}. \label{eq:uncertainSystem}
\end{align}
Here, $\dvrC{\calB_\Delta}$ and $\dvrC{\calD_\Delta}$ denote \emph{perturbed} versions of the matrices $\dvrC{\calB}$ and $\dvrC{\calD}$, respectively. The goal is to analyze the $\Sigma_\text{nom}$-weighted $H_2$ norm of the transfer function from $\hat{w}$ to $z$, which is the closed-loop transfer function of a system with a controller. With $\Sigma_\text{nom}$-weighted, we mean that the input matrices $\dvrC{\calB_\Delta}$ and $\dvrC{\calD_\Delta}$ are scaled with the static weight $\Sigma_\text{nom}^{\frac{1}{2}}$. In this course, we invoke the following result from robust control theory \cite{scherer2000linear}.

\begin{theorem}[Bounding the $H_2$ norm \cite{scherer2000linear}]
  \label{thm:H2}
  Let $\dvrC{\calA}$ be stable. The ($\Sigma_\text{nom}$-weighted) $H_2$ norm of \eqref{eq:uncertainSystem} is smaller than $\rho$ if and only if there exists a matrix $\dvr{\calP} \succ 0$ such that
  \begin{subequations}
    \label{eq:H2normConditions}
  \begin{align}
    \dvrC{\calA^\top} \dvr{\calP} \dvrC{\calA} - \dvr{\calP} + \dvrC{\calC^\top} \dvrC{\calC} \prec 0,\\
    \tr( (\dvrC{\calB_\Delta^\top}\dvr{\calP}\dvrC{\calB_\Delta} + \dvrC{\calD_\Delta^\top} \dvrC{\calD_\Delta})\Sigma_\text{nom} ) < \rho^2.
  \end{align}
  \end{subequations}
\end{theorem}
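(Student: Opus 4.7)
The plan is to reduce the statement to the classical Lyapunov/gramian characterization of the $H_2$ norm of a discrete-time LTI system, keeping in mind that here the input of the performance channel is $\Sigma_\text{nom}$-weighted and that $\Delta$ has already been absorbed into $\dvrC{\calB_\Delta}$ and $\dvrC{\calD_\Delta}$. Concretely, the $\Sigma_\text{nom}$-weighted $H_2$ norm of \eqref{eq:uncertainSystem} can be computed in terms of the observability gramian $W_o$, which is the unique solution of the Lyapunov equation $\dvrC{\calA^\top} W_o \dvrC{\calA} - W_o + \dvrC{\calC^\top}\dvrC{\calC} = 0$ (well-defined because $\dvrC{\calA}$ is assumed stable), via the identity
\begin{align*}
  \|G\|_{H_2}^2 = \tr\bigl( (\dvrC{\calB_\Delta^\top} W_o \dvrC{\calB_\Delta} + \dvrC{\calD_\Delta^\top}\dvrC{\calD_\Delta})\Sigma_\text{nom}\bigr).
\end{align*}
This identity would be the first thing to derive, either by summing the impulse response energy $\sum_{k\ge 0} \tr(\Sigma_\text{nom}^{1/2}G_k^\top G_k \Sigma_\text{nom}^{1/2})$ and recognizing the gramian series, or by invoking Parseval and a standard observability gramian calculation.

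Next, I would use a Lyapunov-monotonicity argument to compare a feasible $\dvr{\calP}\succ 0$ of \eqref{eq:H2normConditions} with $W_o$. Writing $\Delta\calP := \dvr{\calP} - W_o$, subtracting the defining equation for $W_o$ from the strict Lyapunov inequality in \eqref{eq:H2normConditions} gives $\dvrC{\calA^\top}\Delta\calP\,\dvrC{\calA} - \Delta\calP \prec 0$, which, since $\dvrC{\calA}$ is stable, forces $\Delta\calP \succ 0$, i.e.\ $\dvr{\calP} \succ W_o$ (here I would cite the standard discrete-time Lyapunov comparison lemma). Plugging this into the trace expression and using the formula above yields the sufficiency direction: if \eqref{eq:H2normConditions} holds, then
\begin{align*}
  \|G\|_{H_2}^2 \le \tr\bigl( (\dvrC{\calB_\Delta^\top} \dvr{\calP} \dvrC{\calB_\Delta} + \dvrC{\calD_\Delta^\top}\dvrC{\calD_\Delta})\Sigma_\text{nom}\bigr) < \rho^2.
\end{align*}

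For the converse, assume $\|G\|_{H_2} < \rho$ and construct an explicit $\dvr{\calP}$ by a small perturbation of $W_o$. The cleanest choice is $\dvr{\calP} := W_o^\varepsilon$, the unique positive semidefinite solution of the perturbed Lyapunov equation $\dvrC{\calA^\top} W_o^\varepsilon \dvrC{\calA} - W_o^\varepsilon + \dvrC{\calC^\top}\dvrC{\calC} + \varepsilon I = 0$ for $\varepsilon > 0$. Then the first condition in \eqref{eq:H2normConditions} holds with slack $-\varepsilon I$, and by continuity $W_o^\varepsilon \to W_o$ as $\varepsilon\to 0$, so the trace term continuously approaches $\|G\|_{H_2}^2 < \rho^2$, and the second condition holds for $\varepsilon$ small enough; positive definiteness of $\dvr{\calP}$ is automatic since $W_o^\varepsilon \succeq \varepsilon\sum_{k\ge 0}(\dvrC{\calA^\top})^k\dvrC{\calA}^k \succ 0$.

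The main obstacle is establishing the $H_2$--gramian identity with the correct $\Sigma_\text{nom}$ weighting and correctly treating the feedthrough matrix $\dvrC{\calD_\Delta}$ (the discrete-time case allows a nonzero $D$-term, unlike the continuous-time case). Everything else is a standard Lyapunov monotonicity plus a continuity argument; I would cite \cite{scherer2000linear} for the detailed $H_2$-LMI derivation.
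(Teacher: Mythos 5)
Your proof is correct: the Gramian identity for the $\Sigma_\text{nom}$-weighted $H_2$ norm, the Lyapunov comparison $\dvr{\calP}\succ W_o$ for sufficiency, and the $\varepsilon$-perturbed Gramian for necessity together give exactly the standard argument behind this LMI characterization. Note that the paper itself does not prove this theorem but imports it verbatim from \cite{scherer2000linear}; your sketch reproduces the classical proof from that source (adapted to the static input weight $\Sigma_\text{nom}^{1/2}$ and the nonzero feedthrough $\dvrC{\calD_\Delta}$), so there is no substantive difference in approach and no gap.
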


We assume that $\Delta$ is fixed but unknown except for the fact that it satisfies $\|(I - \Delta)\Sigma_\text{nom}^{\frac{1}{2}}\|_F \leq \gamma$, i.e., it is $\gamma$-close to the identity matrix in the weighted Frobenius norm. This constraint can alternatively be written as
\begin{align*}
  \tr\left(
    \begin{pmatrix}
      \Delta\\
      I
    \end{pmatrix}^\top
    \begin{pmatrix}
      I & -I\\
      -I & I
    \end{pmatrix}
    \begin{pmatrix}
      \Delta\\
      I
    \end{pmatrix}
    \Sigma_\text{nom}
  \right)
  \leq \gamma^2.
\end{align*}
In the next step, we add this constraint to the $H_2$ norm condition and arrive at the alternative result that the weighted $H_2$ norm of \eqref{eq:uncertainSystem} is smaller than $\rho$ if and only if there exists a matrix $\dvr{\calP} \succ 0$ and a scalar $\dvr{\lambda} \geq 0$ such that
\begin{align*}
  0 & \succ \dvrC{\calA^\top} \dvr{\calP} \dvrC{\calA} - \dvr{\calP} + \dvrC{\calC^\top} \dvrC{\calC},\\
  \rho^2 &>\tr( (\Delta^\top\dvrC{\calB^\top}\dvr{\calP}\dvrC{\calB}\Delta + \Delta\dvrC{\calD^\top} \dvrC{\calD}\Delta) \Sigma_\text{nom} )\\
  &- \dvr{\lambda}\tr\left(
    \begin{pmatrix}
      \Delta\\
      I
    \end{pmatrix}^\top
    \begin{pmatrix}
      I & -I\\
      -I & I
    \end{pmatrix}
    \begin{pmatrix}
      \Delta\\
      I
    \end{pmatrix}
    \Sigma_\text{nom}
  \right) + \dvr{\lambda}\gamma^2.
\end{align*}
The equivalence follows directly from Theorem~\ref{thm:H2}, since we added a non-negative term to the inequality, which is zero for $\dvr{\lambda} = 0$. In the next step, we introduce an auxiliary variable $\dvr{Q}$ and rewrite the condition as
\begin{align*}
  \Delta^\top(\dvrC{\calB^\top}\dvr{\calP}\dvrC{\calB} + \dvrC{\calD^\top} \dvrC{\calD})\Delta + \dvr{\lambda} \begin{pmatrix}
    \Delta\\
    I
  \end{pmatrix}^\top
  \begin{pmatrix}
    -I & I\\
    I & -I
  \end{pmatrix}
  \begin{pmatrix}
    \Delta\\
    I
  \end{pmatrix} \prec \dvr{Q},
\end{align*}
and $\tr( \dvr{Q} \Sigma_\text{nom} ) + \dvr{\lambda} \gamma^2 < \rho^2$. Equivalently, we can express this as
\begin{align}
  \begin{pmatrix}
    \Delta\\
    I
  \end{pmatrix}^\top
  \underbrace{
  \begin{pmatrix}
    \dvrC{\calB} & 0\\
    \dvrC{\calD} & 0\\
    I & 0\\
    0 & I
  \end{pmatrix}^\top
  \left(
  \begin{array}{cccc}
    \dvr{\calP} & 0 & 0 & 0\\
    0 & I & 0 & 0\\
    0 & 0 & -\dvr{\lambda}I & \dvr{\lambda}I\\
    0 & 0 &  \dvr{\lambda}I & -\dvr{Q} -\dvr{\lambda}I
  \end{array}
  \right)
  \begin{pmatrix}
    \dvrC{\calB} & 0\\
    \dvrC{\calD} & 0\\
    I & 0\\
    0 & I
  \end{pmatrix}}_{(\star)}
  \begin{pmatrix}
    \Delta\\
    I
  \end{pmatrix}
  \label{eq:DeltaDependentMatrixInequality}
\end{align}
being negative definite, and $\tr( \dvr{Q} \Sigma_\text{nom} ) + \dvr{\lambda} \gamma^2 < \rho^2$. 

The reformulations so far preserved the exactness of the $H_2$ norm condition, but they are impractical, since they depend on the unknown $\Delta$. To obtain a more practical formulation, we can use the fact that \eqref{eq:DeltaDependentMatrixInequality} is negative definite if $(\star)$ is negative definite. The matrix $(\star)$ does not depend on $\Delta$, but it is not immediately clear whether moving from \eqref{eq:DeltaDependentMatrixInequality} to $(\star)$ preserves the exactness of the $H_2$ norm condition.

Then, we end up with the robust $H_2$ analysis problem
\begin{subequations}
  \label{eq:robustH2Analysis}
\begin{align}
  &\minimize_{\dvr{\lambda} \geq 0, \dvr{Q},\dvr{\calP}\succ 0} ~~ \tr\left( \dvr{Q} \Sigma_\text{nom} \right) + \dvr{\lambda}\gamma^2\\
  &\mathrm{s.t.} ~~ 
  \dvrC{\calA^\top} \dvr{\calP} \dvrC{\calA} - \dvr{\calP} + \dvrC{\calC^\top} \dvrC{\calC} \prec 0,\\
  &
  \begin{pmatrix}
    \dvrC{\calB} & 0\\
    \dvrC{\calD} & 0\\
    I & 0\\
    0 & I
  \end{pmatrix}^\top 
  \begin{pmatrix}
    \dvr{\calP} & 0 & 0 & 0\\
    0 & I & 0 & 0\\
    0 & 0 & -\dvr{\lambda}I & \dvr{\lambda}I\\
    0 & 0 & \dvr{\lambda}I & -\dvr{Q} - \dvr{\lambda}I
  \end{pmatrix}
  \begin{pmatrix}
    \dvrC{\calB} & 0\\
    \dvrC{\calD} & 0\\
    I & 0\\
    0 & I
  \end{pmatrix} \prec 0,
\end{align}
\end{subequations}
which is the same as the distributionally robust analysis problem \eqref{eq:performanceAnalysisInd}. Since \eqref{eq:robustH2Analysis} and \eqref{eq:performanceAnalysisInd} are identical, we do not need to convexify \eqref{eq:robustH2Analysis} in the controller parameters, which we have already done for \eqref{eq:performanceAnalysisInd} in Section \ref{sec:robustSynthesis}.

\begin{corollary}[Transport plans and exactness of \eqref{eq:robustH2Analysis}]
  \label{cor:exactnessRobustH2Analysis}
  Assume that $\dvrC{\calA}$ is stable and $\Sigma_\text{nom} \succ 0$. Then, the robust $H_2$ analysis problem \eqref{eq:robustH2Analysis} is exact in the sense that there exists a worst-case uncertainty $\Delta \in \bbR^{n_w \times n_w}$ with $\|(I-\Delta)\Sigma_\text{nom}^\frac{1}{2}\|_F \leq \gamma$ such that the squared $\Sigma_\text{nom}$-weighted $H_2$ norm of the system \eqref{eq:uncertainSystem} is equal to the optimal value of \eqref{eq:robustH2Analysis}. This worst-case uncertainty corresponds to the transport plan of the Wasserstein distance in 3) of Theorem~\ref{thm:gelbrichProperties} between $\bbP_\text{nom}$ and $\bbP_{w(t)}$.
\end{corollary}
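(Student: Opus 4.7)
The plan is to leverage the chain of equivalences \eqref{eq:robustH2Analysis} $\Leftrightarrow$ \eqref{eq:performanceAnalysisInd} $\Leftrightarrow$ \eqref{eq:momentRelaxedProblem} $\Leftrightarrow$ \eqref{eq:worstCaseProblemAlternative} $\Leftrightarrow$ \eqref{eq:finiteHorizonProblem} for $(\scrW,\scrS)=(\scrW_\text{ind},\scrS_\text{ind})$, which is supplied by Theorems~\ref{thm:finiteHorizonProblemEquivalent}, \ref{thm:exactnessMomentRelaxation}, and \ref{thm:performanceAnalysisInd}, in combination with the Monge-type transport map for two centered Gaussians provided by Theorem~\ref{thm:gelbrichProperties} part 3. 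This chain already identifies the worst-case cost of \eqref{eq:finiteHorizonProblem} with the optimal value of \eqref{eq:robustH2Analysis}, so the only remaining task is to exhibit a specific deterministic linear $\Delta$ realizing this value.

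First, I would fix an optimal $\dvrMax{\Sigma}^{\star}\in\scrS_\text{ind}$ of \eqref{eq:momentRelaxedProblem} (existence follows from Theorem~\ref{thm:exactnessMomentRelaxation}) and let $\bbP_w^{\star}$ denote the zero-mean Gaussian whose covariance is the $ww$-block of $\dvrMax{\Sigma}^{\star}$. Since $\Sigma_\text{nom}\succ 0$ and both $\bbP_\text{nom}$ and $\bbP_w^{\star}$ are centered Gaussian, Theorem~\ref{thm:gelbrichProperties} part 3 applied to the pair $(\bbP_\text{nom},\bbP_w^{\star})$ supplies a matrix $\Delta\in\bbR^{n_w\times n_w}$ such that $w:=\Delta\hat{w}\sim\bbP_w^{\star}$ whenever $\hat{w}\sim\bbP_\text{nom}$, together with the identity $\bbW(\bbP_\text{nom},\bbP_w^{\star})^{2}=\tr\bigl((I-\Delta)\Sigma_\text{nom}(I-\Delta)^\top\bigr)=\|(I-\Delta)\Sigma_\text{nom}^{1/2}\|_F^{2}$. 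Feasibility of $\dvrMax{\Sigma}^{\star}$ in \eqref{eq:momentRelaxedProblemC}, combined with parts 1 and 2 of Theorem~\ref{thm:gelbrichProperties} (which identify Gelbrich and Wasserstein distances on centered Gaussians), then forces $\|(I-\Delta)\Sigma_\text{nom}^{1/2}\|_F\le\gamma$, as required.

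With this $\Delta$ in hand, I would drive \eqref{eq:uncertainSystem} by an i.i.d. sequence $\hat{w}(t)\sim\bbP_\text{nom}$, so that $w(t)=\Delta\hat{w}(t)$ is i.i.d. with law $\bbP_w^{\star}$ and the resulting pair $(w,\hat{w})$ is feasible for \eqref{eq:worstCaseProblemAlternative} under $\scrW_\text{ind}$. Stability of $\dvrC{\calA}$ ensures that the steady-state value of $\bbE\|z(t)\|^{2}$ equals the squared $\Sigma_\text{nom}$-weighted $H_2$ norm of the transfer function from $\hat{w}$ to $z$ in \eqref{eq:uncertainSystem}. At the same time, the stationary second-moment matrix of $(\chi,w,\hat{w})$ along this trajectory reproduces $\dvrMax{\Sigma}^{\star}$ (this is just the $\scrS_\text{ind}$ instance of the construction in Proposition~\ref{prop:finiteHorizonLowerBound}), so the cost attained equals the optimum of \eqref{eq:momentRelaxedProblem} and hence, by the chain of equivalences, of \eqref{eq:robustH2Analysis}, establishing exactness.

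The main obstacle is the transition from the abstract moment optimizer $\dvrMax{\Sigma}^{\star}$ to the deterministic linear uncertainty $\Delta$: one needs the optimal Wasserstein coupling between two centered Gaussians to be concentrated on the graph of a linear map, which is precisely the Monge statement in Theorem~\ref{thm:gelbrichProperties} part 3 and is the structural reason the assumption $\Sigma_\text{nom}\succ 0$ is imposed.
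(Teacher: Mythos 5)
Your proposal follows essentially the same route as the paper's proof: use the chain of equivalences down to the moment problem \eqref{eq:momentRelaxedProblem}, take an optimal moment matrix, observe that in the $\scrS_\text{ind}$ case the construction of Proposition~\ref{prop:finiteHorizonLowerBound} produces an i.i.d.\ zero-mean Gaussian worst-case disturbance, and then invoke part 3) of Theorem~\ref{thm:gelbrichProperties} (where $\Sigma_\text{nom}\succ 0$ enters) to realize it as $w(t)=\Delta\hat{w}(t)$ with $\|(I-\Delta)\Sigma_\text{nom}^{1/2}\|_F\le\gamma$, so the argument is correct in substance. One small inaccuracy: attainment of an optimal $\Sigma$ in \eqref{eq:momentRelaxedProblem} does not follow from Theorem~\ref{thm:exactnessMomentRelaxation} (which only asserts feasibility and equality of optimal values); the paper obtains it from the strong duality established in Theorem~\ref{thm:performanceAnalysisInd} (cf.\ the cited Luenberger argument), so you should either cite that or argue compactness of the feasible set directly.
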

\begin{proof}
  \textbf{Step 1:} Equivalence of optimization problems. 

  By direct comparison of constraints and objectives, the robust $H_2$ analysis problem \eqref{eq:robustH2Analysis} is identical to the distributionally robust analysis problem \eqref{eq:performanceAnalysisInd}. Therefore, by Theorem~\ref{thm:performanceAnalysisInd}, both problems are equivalent to the original DRC problem \eqref{eq:finiteHorizonProblem}.

  \textbf{Step 2:} Construction of a worst-case disturbance.

  By the strong duality result in Theorem~\ref{thm:performanceAnalysisInd}, an optimal $\dvrMax{\Sigma}$ to problem \eqref{eq:momentRelaxedProblem} exists (cmp. \cite{luenberger1997optimization}, page 217). Based on such an optimal $\dvrMax{\Sigma}$, the construction \eqref{eq:worstCasePolicy} in Proposition~\ref{prop:finiteHorizonLowerBound} yields a worst-case disturbance sequence $w \in \scrW_\text{ind}$ that is optimal for problem \eqref{eq:worstCaseProblemAlternative}. By the independence assumption defining $\scrW_\text{ind}$, the random variables $w(t)$ for $t \in \bbN_0$ are independent. Moreover, by the explicit construction \eqref{eq:worstCasePolicy}, the random variables $w(t)$ are i.i.d. and Gaussian. Specifically, the independence assumption manifests itself in $\dvrMax{\Sigma_{\chi w}} = 0$ and $\dvrMax{\Sigma_{\chi \hat{w}}} = 0$ as specified in \eqref{eq:SigmaPartitioning}. Consequently, the construction \eqref{eq:worstCasePolicy} yields random variables $w(t) = F_{wd} d(t)$, where $d(t) \sim \calN(0,I)$ is i.i.d..

  \textbf{Step 3:} Existence of transport plans.

  Since the Wasserstein constraint $\bbW(\bbP_{w(t)}, \bbP_\text{nom}) \leq \gamma$ holds and both distributions are Gaussian with zero means, with $\Sigma_\text{nom} \succ 0$ by assumption, item 3) of Theorem~\ref{thm:gelbrichProperties} implies the existence of random variables $w(t) \sim \bbP_{w(t)}$ and $\hat{w}(t) \sim \bbP_\text{nom}$ and an optimal linear transport plan $\Delta \in \bbR^{n_w \times n_w}$ such that $w(t) = \Delta \hat{w}(t)$. This transport plan satisfies $\|(I-\Delta)\Sigma_\text{nom}^{1/2}\|_F \leq \gamma$ and does not depend on $t$, since the marginal distributions of $w$ are i.i.d.. Consequently, there exists a $\Delta \in \bbR^{n_w \times n_w}$ such that the squared $\Sigma_\text{nom}$-weighted $H_2$ norm of the system \eqref{eq:uncertainSystem}, which equals the objective value of problem \eqref{eq:worstCaseProblemAlternative} under the optimal disturbance sequence $w=\{\Delta \hat{w}(0), \Delta \hat{w}(1), \ldots\}$, achieves the optimal value of \eqref{eq:robustH2Analysis}.
  %
  %
  %
  %
\end{proof}

It is worth noting that Corollary \ref{cor:exactnessRobustH2Analysis} does not only establish the equivalence of the distributionally robust analysis problem \eqref{eq:performanceAnalysisInd} and the robust $H_2$ analysis problem \eqref{eq:robustH2Analysis}, but also shows that the latter is exact in the sense that there exists a worst-case disturbance that achieves the supremum of the $\Sigma_\text{nom}$-weighted $H_2$ norm.

\section{Simulation Results}
\label{sec:simulations}

To validate the proposed distributionally robust controller synthesis framework, we conduct simulations on a linearized wind turbine blade control system, inspired by \cite{van2015distributionally}, modeled as a discrete-time linear time-invariant (LTI) system. The system state is \( x = (\omega, h, \dot{h}, \phi, \dot{\phi}, \beta, \dot{\beta})^\top\), where \( \omega \) is the rotor speed (rad/s), \( h \) is the flapwise displacement (m), \( \phi \) is the torsional angle (rad), and \( \beta \) is the pitch angle (rad). The control input is \( u \), representing the pitch rate command (rad/s), and the disturbance input is \( w = (v_x, v_z)^\top \), representing wind speed components (m/s) in the axial and vertical directions, respectively. The performance outputs are \( z = (\omega, h, \phi, u)^\top \), and the measured outputs are \( y = (\omega, h, \phi)^\top \). The control objective is to minimize the root-mean-square (RMS) values of flapwise displacement (\( h \), target RMS \(\leq 2\) m) and torsional angle (\( \phi \), target RMS \(\leq 0.02\) rad) under stochastic wind turbulence.

The system dynamics are described by:
\begin{subequations}
	\label{eq:expSystem}
	\begin{align}
		x(t+1) &= A x(t) + B_w w(t) + B_u u(t), \label{eq:expSystemA} \\
		z(t) &= C_z x(t) + D_{zw} w(t) + D_{zu} u(t), \label{eq:expSystemB} \\
		y(t) &= C_y x(t) + D_{yw} w(t), \label{eq:expSystemC}
	\end{align}
\end{subequations}
where \( A \in \mathbb{R}^{7 \times 7} \), \( B_w \in \mathbb{R}^{7 \times 2} \), \( B_u \in \mathbb{R}^{7 \times 1} \), \( C_z \in \mathbb{R}^{4 \times 7} \), \( D_{zw} \in \mathbb{R}^{4 \times 2} \), \( D_{zu} \in \mathbb{R}^{4 \times 1} \), \( C_y \in \mathbb{R}^{4 \times 7} \), and \( D_{yw} \in \mathbb{R}^{4 \times 2} \) are the system matrices, resulting from a continuous-time model which is discretized with a sampling time of \( \Delta t = 0.05 \) s. The disturbance \( w(t) \) is modeled as a zero-mean random process with nominal covariance \( \Sigma_\text{nom} = I_2 \). Details, such as the exact specification of the system matrices, are provided in the accompanying repository~\url{https://github.com/SphinxDG/Distri-\\butionallyrobustLMIsynthesisforLTIsystems}.

We use the correlated ambiguity set \( \scrW_\text{cor} \) instead of the independent set \( \scrW_\text{ind} \) because wind turbine blades are sensitive to temporally correlated wind disturbances. Wind turbulence exhibits low-frequency components that persist over time, causing sustained loads on the blade. The ambiguity set \( \scrW_\text{cor} \) allows disturbances to capture these temporal dependencies, enabling the controller to mitigate worst-case scenarios where disturbances amplify specific modes, such as flapwise or torsional oscillations.

The controller synthesis is based on the problem \eqref{eq:mainProblem} with the correlated ambiguity set \( \scrW_\text{cor} \). The nominal covariance matrix is \( \Sigma_\text{nom} = I_2 \), and the Wasserstein radius is set to \( \gamma = 0.5 \).

We synthesized two controllers: a standard $H_2$ controller, minimizing the nominal $H_2$ cost, and a DR controller, solving \eqref{eq:mainProblem} for \( \scrW_\text{cor} \). The DR controller is designed using the convex LMI formulation in Theorem~\ref{thm:convexSynthesisCor}. The $H_2$ controller is synthesized using standard LMI-based $H_2$ synthesis. Both controllers are implemented as output-feedback controllers of the form \eqref{eq:controller}, with \( \dvrC{A_c}, \dvrC{B_c}, \dvrC{C_c}, \dvrC{D_c} \) derived from the LMI solutions via the parametrization in Theorem~\ref{thm:schererParametrization}.

To evaluate robustness, we compute worst-case output variances for each performance output (\( \omega, h, \phi, u \)) individually under the ambiguity set \( \scrW_\text{cor} \) as follows. For each output, a worst-case disturbance is designed by solving the moment relaxation problem \eqref{eq:momentRelaxedProblem}, which maximizes the output variance subject to a discrete-time Lyapunov equation and Wasserstein constraints. Specifically, for each output \( z_i \in \{ \omega, h, \phi, u \} \), we solve
\begin{align}
	\maximize_{\dvrMax{\Sigma} \in \scrS_\text{cor}} &\quad \tr\left( \begin{pmatrix} \dvrC{\calC_i} & \dvrC{\calD_i} & 0 \end{pmatrix} \dvrMax{\Sigma} \begin{pmatrix} \dvrC{\calC_i} & \dvrC{\calD_i} & 0 \end{pmatrix}^\top \right), \label{eq:expMomentProblem} \\
	\mathrm{s.t.} &\quad \eqref{eq:momentRelaxedProblemB} - \eqref{eq:momentRelaxedProblemD}, \notag
\end{align}
where \( \dvrC{\calC_i} \) and \( \dvrC{\calD_i} \) are the rows of the closed-loop matrices corresponding to the output \( z_i \). This approach targets each output with a worst-case correlated wind disturbance, as the ambiguity set \( \scrW_\text{cor} \) allows disturbances to exploit temporal dependencies.

\begin{table}[t]
	\centering
	\caption{Worst-Case Output Variances of $H_2$ and DR Controllers synthesized for unscaled outputs.}
	\label{tab:results}
	\begin{tabular}{lcc}
		\toprule
		\textbf{Output (Unit)} & \textbf{$H_2$ Variance} & \textbf{DR Variance} \\
		\midrule
		Rotor speed (rad$^2$/s$^2$) & 25.781 & 0.388 \\
		Flapwise disp. (m$^2$) & 22.246 & 20.623 \\
		Torsional angle (rad$^2$) & 0.000385 & 0.000383 \\
		Control input (rad$^2$/s$^2$) & 8.453 & 11.387 \\
		\bottomrule
	\end{tabular}
\end{table}

\begin{table}[t]
	\centering
	\caption{Worst-Case Output Variances of $H_2$ and DR Controllers synthesized for scaled outputs.}
	\label{tab:resultsForScaledOutputs}
	\begin{tabular}{lcc}
		\toprule
		\textbf{Output (Unit)} & \textbf{$H_2$ Variance} & \textbf{DR Variance} \\
		\midrule
		Rotor speed (rad$^2$/s$^2$) & 16.889 & 0.129 \\
		Flapwise disp. (m$^2$) & 10.424 & 3.906 \\
		Torsional angle (rad$^2$) & 0.000385 & 0.000383 \\
		Control input (rad$^2$/s$^2$) & 45.670 & 142.453 \\
		\bottomrule
	\end{tabular}
\end{table}

Table~\ref{tab:results} reports the worst-case variances for each performance output under individual worst-case disturbance attacks. Clearly, both controllers do not yet meet the design requirements for flapwise displacement, since the target RMS is 2 m, corresponding to a variance of 4 m$^2$. For this reason, we carry out a second design step, where we scale the outputs statically to meet the design requirements. The scaled outputs are defined as \( \tilde{z} = (\tilde{z}_1 ~ \tilde{z}_2~ \tilde{z}_3~ \tilde{z}_4)^\top = (\omega, 5 h, 100 \phi, u)^\top \), where the scaling of \( h \) is chosen to meet the design requirement of 2 m RMS, and the scaling of \( \phi \) is chosen to account for the dimensions of the torsional angle.

Table~\ref{tab:resultsForScaledOutputs} shows the worst-case variances for the scaled outputs. After scaling, the DR controller achieves significantly lower variances for the rotor speed \( \omega \) and a lower flapwise displacement \( h \) compared to the $H_2$ controller. In particular, the DR controller is now able to meet the design requirements. The torsional angle \( \phi \) remains nearly identical for both controllers, while the control input \( u \) shows a higher variance for the DR controller, indicating that more aggressive control actions are required to mitigate the disturbances in the flapwise and rotor speed outputs.

The results highlight the capabilities of the DR controller to mitigate worst-case disturbances, particularly for \( \omega \) and \(h\), by accounting for distributional uncertainty in \( \scrW_\text{cor} \). This is confirmed by the Bode diagrams supplemented in Figure \ref{fig:expRotorSpeed}, which display the amplitude response from the wind disturbance to each of the performance outputs. We see that the DR controller achieves a significantly lower amplitude response for the rotor speed \( \omega \) compared to the $H_2$ controller. The amplitude responses for the torsional angle \( \phi \) are essentially identical for both controllers, while the control input \( u \) shows a higher response for the DR controller. The peaks of these amplitude responses correspond to the frequencies where the disturbances have the most significant impact on the outputs. We emphasize that we believe that $H_2$ controllers can be designed to meet the design requirements as well (e.g., with frequency dependent weightings). However, for the specific performance criteria investigated here, the DR controller should be easier to tune, since it is designed to meet those criteria directly.

We mention that representative worst-case disturbances designed with \eqref{eq:expMomentProblem} yield wind variances of approximately 1.94 m$^2$/s$^2$ (axial) and 1.72 m$^2$/s$^2$ (vertical). These are realistic for offshore or moderate onshore conditions at wind speeds of \(9.5\) m/s, aligning with typical turbulence models (turbulence intensity \(\approx 14.6\%\)) \cite{burton2011wind}.

\begin{figure}
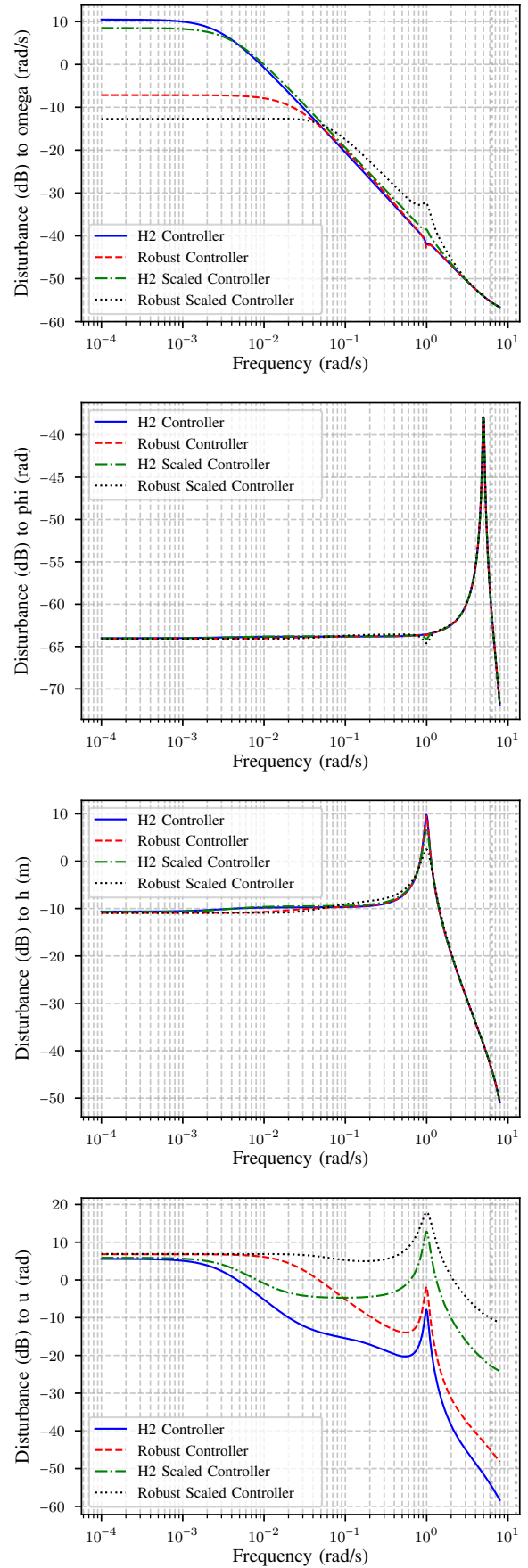

  \begin{subfigure}{0.9\linewidth}
    \centering
    \resizebox{\linewidth}{!}{\input{omega_svd.pgf}}
    \end{subfigure}
    \begin{subfigure}{0.9\linewidth}
    \centering
    \resizebox{\linewidth}{!}{\input{phi_svd.pgf}}
    \end{subfigure}
    \begin{subfigure}{0.9\linewidth}
    \centering
    \resizebox{\linewidth}{!}{\input{h_svd.pgf}}
    \end{subfigure}
    \begin{subfigure}{0.9\linewidth}
    \centering
    \resizebox{\linewidth}{!}{\input{u_svd.pgf}}
    \end{subfigure}
    \caption{Bode diagrams from the maximum singular value of the disturbance to each performance output, $\omega$, $\phi$, $h$, $u$.}
    \label{fig:expRotorSpeed}
\end{figure}

\section{Conclusion}
\label{sec:conclusion}

In this article, we establish a fundamental connection between distributionally robust controller synthesis and robust $H_2$ control theory. Our main contribution is the formulation of DRC problems with Wasserstein ambiguity sets as convex LMI synthesis problems. This reformulation is exact and allows for efficient numerical implementation using standard SDP solvers or, even more efficiently, using structure exploiting algorithms \cite{gramlich2023structure}. Both correlated and independent disturbances are handled within the same mathematical framework, and we provide explicit convex formulations for both cases.

The modular nature of our approach extends beyond pure Wasserstein constraints. As sketched in Appendix~\ref{app:mergingAmbiguitySets}, the framework readily accommodates hybrid ambiguity sets that combine distributional uncertainty with autocorrelation constraints, opening avenues for incorporating diverse forms of prior knowledge about disturbance characteristics. This flexibility suggests promising directions for future research in DRC with structured uncertainty descriptions.

\bibliographystyle{plain}
\bibliography{sources}

\appendix

\subsection{Proof of Theorem~\ref{thm:finiteHorizonProblemEquivalent}}
\label{app:equivalence}

Let $(w, \hat{w})$ be a feasible point of \eqref{eq:worstCaseProblemAlternative}. By \eqref{eq:worstCaseProblemAlternativeC}, the joint distribution $\mathbb{P}_{(w(t), \hat{w}(t))}$ is a coupling in $\Pi(\mathbb{P}_{w(t)}, \mathbb{P}_{\hat{w}(t)})$ such that $\mathbb{W}(\mathbb{P}_{w(t)}, \mathbb{P}_{\hat{w}(t)}) = \mathbb{W}(\mathbb{P}_{w(t)}, \mathbb{P}_{\text{nom}}) \leq \gamma$ per \eqref{eq:worstCaseProblemAlternativeD}. Thus, $w$ satisfies the feasibility constraint of \eqref{eq:finiteHorizonProblem}. Since the objective essentially depends only on the choice of $w$, the values match.

Now, let $w \in \mathscr{W}$ be a feasible point of \eqref{eq:finiteHorizonProblem}. For each $t \in \mathbb{N}_0$, there exists a coupling $\pi_t \in \Pi(\mathbb{P}_{w(t)}, \mathbb{P}_{\text{nom}})$ on $\mathbb{R}^d \times \mathbb{R}^d$ with $\mathbb{W}(\mathbb{P}_{w(t)}, \mathbb{P}_{\text{nom}}) \leq \gamma$ (see \cite{villani2008optimal}, Theorem 4.1). We construct a probability space $(\Omega^\infty, \mathbb{P}^\infty)$ supporting $w$ and $\hat{w}$ such that $\mathbb{P}^\infty_{(w(t), \hat{w}(t))} = \pi_t$ and $\mathbb{P}^\infty_w = \mathbb{P}_w$.

Consider the separable, metric sample space $\Omega$ with measure $\mathbb{P}$ supporting $w$. For each $t \in \mathbb{N}_0$, define:
\[
\Omega^t = \Omega \times \left( \mathbb{R}^d \right)^{t+1},
\]
supporting $(w, \hat{w}(0), \ldots, \hat{w}(t))$, where $\hat{w}(t') : \mathbb{R}^d \to \mathbb{R}^d$ for $t' = 0, \ldots, t$.

Construct measures $\mathbb{P}^t$ on $\Omega^t$ using the gluing lemma \cite{villani2008optimal}. For $t = 0$, set $\Omega^0 = \Omega \times \mathbb{R}^d$. Glue $\mathbb{P}$ on $\Omega$ (inducing $\mathbb{P}_{w(0)}$ on $\mathbb{R}^d$ via $w(0)$) with $\pi_0$ on $\mathbb{R}^d \times \mathbb{R}^d$ (first marginal $\mathbb{P}_{w(0)}$) to obtain $\mathbb{P}^0$ on $\Omega^0$ such that:
\begin{itemize}
  \item The marginal on $\Omega$ is $\mathbb{P}$,
  \item The joint distribution of $(w(0), \hat{w}(0))$ is $\pi_0$.
\end{itemize}

For $t \geq 0$, given $\mathbb{P}^t$ on $\Omega^t$, define $\Omega^{t+1} = \Omega^t \times \mathbb{R}^d = \Omega \times \left( \mathbb{R}^d \right)^{t+2}$. Glue $\mathbb{P}^t$ on $\Omega^t$ (inducing $\mathbb{P}_{w(t+1)}$ on $\mathbb{R}^d$ via $w(t+1)$) with $\pi_{t+1}$ on $\mathbb{R}^d \times \mathbb{R}^d$ (first marginal $\mathbb{P}_{w(t+1)}$) to obtain $\mathbb{P}^{t+1}$ on $\Omega^{t+1}$ such that:
\begin{itemize}
  \item The marginal on $\Omega^t$ is $\mathbb{P}^t$,
  \item The joint distribution of $(w(t+1), \hat{w}(t+1))$ is $\pi_{t+1}$.
\end{itemize}

The measures $\mathbb{P}^t$ are consistent: the projection of $\mathbb{P}^{t+1}$ onto $\Omega^t$ is $\mathbb{P}^t$, as the gluing lemma preserves marginals at each step.

Define the infinite space:
\[
\Omega^\infty = \Omega \times \prod_{t=0}^\infty \mathbb{R}^d,
\]
with the product $\sigma$-algebra, supporting $(\omega, \hat{w}(0), \hat{w}(1), \ldots)$. The projection $p^t : \Omega^\infty \to \Omega^t$ is:
\[
p^t(\omega, \hat{w}(0), \hat{w}(1), \ldots) = (\omega, \hat{w}(0), \ldots, \hat{w}(t)).
\]
Since $\mathbb{P}^{t+1} \circ (p^t)^{-1} = \mathbb{P}^t$, Kolmogorov’s extension theorem yields a unique $\mathbb{P}^\infty$ on $\Omega^\infty$ such that $\mathbb{P}^\infty \circ (p^t)^{-1} = \mathbb{P}^t$ for all $t$.

On $\Omega^\infty$, define:
\begin{align*}
  w(\omega, \hat{w}(0), \hat{w}(1), \ldots) &= w(\omega),\\
  \hat{w}(\omega, \hat{w}(0), \hat{w}(1), \ldots)(t) = \hat{w}(t).
\end{align*}
Under $\mathbb{P}^\infty$, $\mathbb{P}^\infty_w = \mathbb{P}_w$ (marginal on $\Omega$ is $\mathbb{P}$), and $\mathbb{P}^\infty_{(w(t), \hat{w}(t))} = \pi_t$ for each $t$, as ensured by $\mathbb{P}^t$.

Thus, $(w, \hat{w})$ on $(\Omega^\infty, \mathbb{P}^\infty)$ satisfies the constraints of \eqref{eq:worstCaseProblemAlternative}, and the objective values equal those of $w$ in \eqref{eq:finiteHorizonProblem}. This establishes the one-to-one correspondence, completing the proof.
\hfill $\square$

\subsection{Wasserstein and Autocorrelation Conditions}
\label{app:mergingAmbiguitySets}

While the main contribution of this article concerns Wasserstein ambiguity sets, we wish to highlight the advantages of the modular robust control framework by briefly sketching the compatibility of the Wasserstein ambiguity set description presented in this article with autocorrelation conditions as proposed in \cite{gusev1995minimax_a,scherer2000robust}.

In this course, let us assume that the autocorrelation terms of the disturbance input $w \in \scrW_\text{cor}$ satisfy additionally the $N$ constraints
\begin{align}
  \limsup_{T \to \infty} \frac{1}{T-l} \sum_{t = l}^{T-1} 
  \sum_{k=0}^{l} \tr \big( \bbE w_{t-k} w_t^\top  M_i^k \big) \leq \gamma_i, \quad i = 1, \ldots, N \label{eq:autocorrelationConditions}
\end{align}
defined by matrices $M_i^k \in \bbR^{n_w \times n_w}$ and $\gamma_i \in \bbR$. This restriction can be formulated in the (integral quadratic constraint) form
\begin{align*}
  \limsup_{T \to \infty} \frac{1}{T} \sum_{t = 0}^{T-1} \bbE w_t^\top (C^\psi_i x^\psi_t + D_i^\psi w_t) \leq \gamma_i, \quad i = 1, \ldots, N,
\end{align*}
where $x^\psi$ is the stacked vector of delayed disturbance inputs
\begin{align*}
  x_t^\psi = \begin{pmatrix}
    w_{t-1}^\top & w_{t-2}^\top & \cdots & w_{t-l}^\top
  \end{pmatrix}^\top \in \bbR^{(l+1)n_w}.
\end{align*}
and $C^\psi_i$ and $D_i^\psi$ are matrices that captures the terms $M_i^k$ in the sense that
\begin{align*}
  C^\psi_i = \begin{pmatrix}
    M_i^1 & \cdots & M_i^l
  \end{pmatrix}, \quad D_i^\psi = M_i^0.
\end{align*}
We emphasize that $x_t^\psi$ can be recursively defined through an appropriate state-space representation of the form
\begin{align*}
  x_{t+1}^\psi = A^\psi x_t^\psi + B^\psi w_t.
\end{align*}
Next, we incorporate the prior knowledge that the disturbance sequence $w$ satisfies the autocorrelation conditions \eqref{eq:autocorrelationConditions} into the moment problem \eqref{eq:momentRelaxedProblem}. The first step is the definition of the extended moment matrix $\dvrMax{\Sigma}$, which is given by
\begin{align*}
  \begin{pmatrix}
    \dvrMax{\Sigma_{\chi\chi}} & \dvrMax{\Sigma_{\chi x^\psi}} & \dvrMax{\Sigma_{\chi w}} & \dvrMax{\Sigma_{\chi\hat{w}}}\\
    \dvrMax{\Sigma_{x^\psi\chi}} & \dvrMax{\Sigma_{x^\psi x^\psi}} & \dvrMax{\Sigma_{x^\psi w}} & \dvrMax{\Sigma_{x^\psi\hat{w}}}\\
    \dvrMax{\Sigma_{w\chi}} & \dvrMax{\Sigma_{w x^\psi}} & \dvrMax{\Sigma_{ww}} & \dvrMax{\Sigma_{w\hat{w}}}\\
    \dvrMax{\Sigma_{\hat{w}\chi}} & \dvrMax{\Sigma_{\hat{w}x^\psi}} & \dvrMax{\Sigma_{\hat{w}w}} & \dvrMax{\Sigma_{\hat{w}\hat{w}}}
  \end{pmatrix}.
\end{align*}
Subsequently, we define the modified moment problem
\begin{subequations}
  \label{eq:momentRelaxedProblem2}
\begin{align}
  \maximize_{\dvrMax{\Sigma} \in \scrS} ~&~ \tr\left(
  \begin{pmatrix}
    \dvrC{\calC} & 0 & \dvrC{\calD} & 0
  \end{pmatrix}
  \dvrMax{\Sigma}
  \begin{pmatrix}
    \dvrC{\calC} & 0 & \dvrC{\calD} & 0
  \end{pmatrix}^\top
  \right), \label{eq:momentRelaxedProblem2A}
\end{align}
subject to
\begin{align}
  &
  \begin{pmatrix}
    I & 0 & 0 & 0\\
    0 & I & 0 & 0
  \end{pmatrix}
  \dvrMax{\Sigma}
  \begin{pmatrix}
    I & 0 & 0 & 0\\
    0 & I & 0 & 0
  \end{pmatrix}^\top \nonumber\\
  &\qquad
  =
  \begin{pmatrix}
    \dvrC{\calA} & 0 & \dvrC{\calB} & 0\\
    0 & A^\psi & B^\psi & 0
  \end{pmatrix}
  \dvrMax{\Sigma}
  \begin{pmatrix}
    \dvrC{\calA} & 0 & \dvrC{\calB} & 0\\
    0 & A^\psi & B^\psi & 0
  \end{pmatrix}^\top,\label{eq:momentRelaxedProblem2B}\\
  &
  \gamma^2 \geq
  \tr\left(
  \begin{pmatrix}
    0 & 0 & I & 0\\
    0 & 0 & 0 & I
  \end{pmatrix}
  \dvrMax{\Sigma}
  \begin{pmatrix}
    0 & 0 & I & 0\\
    0 & 0 & 0 & I\\
  \end{pmatrix}^\top
  \begin{pmatrix}
    I & -I\\
    -I & I
  \end{pmatrix}
  \right) , \label{eq:momentRelaxedProblem2C}\\
  & \Sigma_\text{nom} = \begin{pmatrix}
    0 & 0 & 0 & I
  \end{pmatrix}
  \dvrMax{\Sigma}
  \begin{pmatrix}
    0 & 0 & 0 & I
  \end{pmatrix}^\top, \label{eq:momentRelaxedProblem2E}\\
  & \dvrMax{\Sigma} \succeq 0, \label{eq:momentRelaxedProblem2D}\\
  &
  \tr \left(
    \begin{pmatrix}
      0 & C_i^\psi & D_i^\psi & 0\\
      0 & 0 & I & 0
    \end{pmatrix}
    \dvrMax{\Sigma}
    \begin{pmatrix}
      0 & 0 & I & 0\\
      0 & C_i^\psi & D_i^\psi & 0
    \end{pmatrix}^\top
  \right) \leq 2 \gamma_i,\nonumber\\
  &\qquad i = 1, \ldots, N. \label{eq:momentRelaxedProblem2F}
\end{align}
\end{subequations}
Now, with the modified moment problem \eqref{eq:momentRelaxedProblem2}, we can carry out the very same steps as in Section \ref{subsec:robustPerformanceAnalysisCor}. Namely, we can carry out Lagrange dualization of \eqref{eq:momentRelaxedProblem2} to obtain the dual problem
\begin{subequations}
  \label{eq:performanceAnalysisCor2}
\begin{align}
    &\minimize_{\dvr{\lambda} \geq 0, \dvr{\mu} \geq 0,\dvr{\calP}, \dvr{Q}} ~~ \tr\left( \dvr{Q} \Sigma_\text{nom} \right) + \dvr{\lambda}\gamma^2 + 2 \sum_{i = 0}^N \gamma_i \dvr{\mu_i} \label{eq:performanceAnalysisCor2A}
\end{align}
subject to
\begin{align}
  \resizebox{\linewidth}{!}{$
    \begin{pmatrix}
      \bullet
    \end{pmatrix}^\top
    \begin{pmatrix}
      -\dvr{\calP_{11}} & -\dvr{\calP_{12}} & 0 & 0 & 0 & 0 & 0\\
      -\dvr{\calP_{21}} & -\dvr{\calP_{22}} & 0 & 0 & 0 & 0 & 0\\
      0 & 0 & \dvr{\calP_{11}} & \dvr{\calP_{12}} & 0 & 0 & 0\\
      0 & 0 & \dvr{\calP_{21}} & \dvr{\calP_{22}} & 0 & 0 & 0\\
      0 & 0 & 0 & I & 0 & 0 & 0\\
      0 & 0 & 0 & 0 & 0 & I & 0\\
      0 & 0 & 0 & 0 & I & -\dvr{\lambda}I & \dvr{\lambda}I\\
      0 & 0 & 0 & 0 & 0 & \dvr{\lambda}I & -\dvr{Q} - \dvr{\lambda}I
    \end{pmatrix}
    \begin{pmatrix}
      I & 0 & 0 & 0\\
      0 & I & 0 & 0\\
      \dvrC{\calA} & 0 & \dvrC{\calB} & 0\\
      0 & A^\psi & B^\psi & 0\\
      \dvrC{\calC} & 0 & \dvrC{\calD} & 0\\
      0 & C^\psi(\dvr{\mu}) & D^\psi(\dvr{\mu}) & 0\\
      0 & 0 & I & 0\\
      0 & 0 & 0 & I
    \end{pmatrix} \prec 0.$} \label{eq:performanceAnalysisCor2B}
  \end{align}
\end{subequations}
Here, $C^\psi(\dvr{\mu}) := \sum_{i=1}^N \dvr{\mu_i} C^\psi_i$ and $D^\psi(\dvr{\mu}) := \sum_{i=1}^N \dvr{\mu_i} D^\psi_i$ are the matrices that capture the autocorrelation conditions \eqref{eq:autocorrelationConditions} and $\dvr{\mu_i}$ are the Lagrange multipliers of these constraints. Further, the matrices $A^\psi$, $B^\psi$, $C^\psi(\dvr{\mu})$, and $D^\psi(\dvr{\mu})$ are exactly the filter matrices studied in \cite{scherer2000robust} to capture the autocorrelation conditions.

We emphasize that the question of weak and strong duality needs to be reconsidered in the derivation of \eqref{eq:performanceAnalysisCor2}.

The analysis problem \eqref{eq:performanceAnalysisCor2} is similar to the one presented in \cite{scherer2000robust}. Thus, synthesis can be convexified as in \cite{scherer2000robust} using the \emph{elimination lemma}.
\end{document}